\newtheorem{theorem}{Theorem}
\newtheorem{lemma}{Lemma}
\newtheorem{remark}{Remark}
\newtheorem{proposition}{Proposition}
\newtheorem{definition}{Definition}
\newtheorem{corollary}{Corollary}
\newcommand{\rd}{\, \mathrm{d}}
\newcommand{\bszero}{\boldsymbol{0}}
\newcommand{\bsc}{\boldsymbol{c}}
\newcommand{\bsk}{\boldsymbol{k}}
\newcommand{\bsl}{\boldsymbol{l}}
\newcommand{\bsr}{\boldsymbol{r}}
\newcommand{\bsx}{\boldsymbol{x}}
\newcommand{\bsy}{\boldsymbol{y}}
\newcommand{\bsw}{\boldsymbol{w}}
\newcommand{\bsalpha}{\boldsymbol{\alpha}}
\newcommand{\CC}{\mathbb{C}}
\newcommand{\FF}{\mathbb{F}}
\newcommand{\NN}{\mathbb{N}}
\newcommand{\RR}{\mathbb{R}}
\newcommand{\Ecal}{\mathcal{E}}
\newcommand{\Hcal}{\mathcal{H}}
\newcommand{\Kcal}{\mathcal{K}}
\newcommand{\per}{\mathrm{per}}
\newcommand{\tr}{\mathrm{tr}}
\newcommand{\wal}{\mathrm{wal}}
\newcommand{\wor}{\mathrm{wor}}
\begin{document}

\title{An explicit construction of optimal order quasi-Monte Carlo rules for smooth integrands\thanks{The work of T.~G. is supported by JSPS Grant-in-Aid for Young Scientists No.15K20964.
The work of K.~S. and T.~Y. is supported by Australian Research Council's Discovery Projects funding scheme (project number DP150101770).}}

\author{Takashi Goda\thanks{Graduate School of Engineering, The University of Tokyo, 7-3-1 Hongo, Bunkyo-ku, Tokyo 113-8656, Japan (\tt{goda@frcer.t.u-tokyo.ac.jp})}, Kosuke Suzuki\thanks{School of Mathematics and Statistics, The University of New South Wales, Sydney 2052, Australia. ({\tt kosuke.suzuki1@unsw.edu.au})}, Takehito Yoshiki\thanks{School of Mathematics and Statistics, The University of New South Wales, Sydney 2052, Australia. ({\tt takehito.yoshiki1@unsw.edu.au})}}

\date{\today}

\maketitle

\begin{abstract}
In a recent paper by the authors, it is shown that there exists a quasi-Monte Carlo (QMC) rule which achieves the best possible rate of convergence for numerical integration in a reproducing kernel Hilbert space consisting of smooth functions.
In this paper we provide an explicit construction of such an optimal order QMC rule.
Our approach is to exploit both the decay and the sparsity of the Walsh coefficients of the reproducing kernel simultaneously.
This can be done by applying digit interlacing composition due to Dick to digital nets with large minimum Hamming and Niederreiter-Rosenbloom-Tsfasman metrics due to Chen and Skriganov.
To our best knowledge, our construction gives the first QMC rule which achieves the best possible convergence in this function space.
\end{abstract}
Keywords: Quasi-Monte Carlo, Numerical integration, Higher order digital nets, Sobolev space\\
MSC classifications: Primary 65C05, 11K38; Secondary 65D30, 65D32, 11K45

\section{Introduction and the main result}\label{sec:intro}
In this paper we study numerical integration of smooth functions defined on the $s$-dimensional unit cube.
For an integrable function $f:[0,1)^s\to \RR$, we denote the true integral of $f$ by 
\begin{align*}
I(f) = \int_{[0,1)^s}f(\bsx) \rd \bsx.
\end{align*}
For an $N$ element point set $P=\{\bsx_0,\ldots,\bsx_{N-1}\}\subset [0,1)^s$ and an arbitrary real tuple $\bsw=\{w_0,\ldots,w_{N-1}\}$,
we consider a quadrature rule of the form
\begin{align*}
I(f; P,\bsw) = \sum_{n=0}^{N-1}w_nf(\bsx_n) ,
\end{align*}
as an approximation of $I(f)$.
In particular, we are interested in the case where $w_0=\cdots =w_{N-1}=1/N$, i.e., an approximation by an equal-weight quadrature rule where the weights sum up to 1.
This type of quadrature rule is called a \emph{quasi-Monte Carlo (QMC) rule} and has been extensively studied in the literature, see for instance \cite{DPbook,Niebook,SJbook}.
Since a QMC rule depends only on the choice of a point set $P$, we simply write
\begin{align*}
I(f; P) = \frac{1}{|P|}\sum_{\bsx\in P}f(\bsx) ,
\end{align*}
to denote a QMC rule using a point set $P$, where points are counted according to their multiplicity.

We measure the quality of a quadrature rule in terms of the so-called \emph{worst-case error}.
Let $V$ be a function space with norm $\lVert \cdot \rVert_V$.
The worst-case error of a quadrature rule in $V$ is defined as the supremum of the approximation error in the unit ball of $V$, i.e.,
\begin{align*}
e^{\wor}(V;P,\bsw) := \sup_{\substack{f\in V\\ \lVert f \rVert_V\le 1}}\left| I(f; P,\bsw)-I(f)\right| .
\end{align*}
In case of a QMC rule, we simply write
\begin{align*}
e^{\wor}(V;P) := \sup_{\substack{f\in V\\ \lVert f \rVert_V\le 1}}\left| I(f; P)-I(f)\right| .
\end{align*}
As an important example of a normed space, a reproducing kernel Hilbert space of Sobolev type consisting of functions with relatively low smoothness has been often considered in the literature, see for instance \cite[Section~2]{DPbook} and \cite{Hick98}.
Such a function space is not only connected to geometric discrepancies of point sets, but also considered relevant to financial applications \cite[Appendix~A]{NWbook}.

Function spaces with high smoothness have received considerable attention in recent applications in the area of partial differential equations with random coefficients, see for instance \cite{DKLNS14,KSS12}.
In fact, such applications are in need of using quadrature rules which can exploit the smoothness of functions and achieve high order convergence.
Dick and his collaborators \cite{BD09,BDP11,Dick07,Dick08} have developed an important class of QMC rules named \emph{higher order digital nets} achieving almost optimal convergence of order $N^{-\alpha}(\log N)^{c(s,\alpha)}$ for functions with smoothness $\alpha\in \NN$, $\alpha \geq 2$, for some $c(s,\alpha)>0$.
Although this order of convergence is the best possible up to some power of a $\log N$ factor, it has been unknown until recently whether the exponent $c(s,\alpha)$ can be improved to optimal.
As far as the authors know, there are only two papers addressing this issue \cite{GSYexist,HMOT15}.
In \cite{HMOT15} Hinrichs et al.\ considered periodic Sobolev spaces and periodic Nikol'skij-Besov spaces with (real-valued) dominating mixed smoothness up to 2, and obtained $c(s,\alpha)=(s-1)/2$ for order 2 digital nets, which is best possible.
In \cite{GSYexist} the authors of this paper considered a reproducing kernel Hilbert space of Sobolev type consisting of non-periodic functions with smoothness $\alpha\in \NN$, $\alpha \geq 2$, and proved the existence of a digitally shifted order $\beta$ digital nets achieving $c(s,\alpha)=(s-1)/2$ when $\beta \geq 2\alpha$.
Although the resulting value of $c(s,\alpha)$ is best possible, a random element that stems from digital shift was involved in the result so that the construction is not explicit.

In this study, as a continuation of the paper \cite{GSYexist}, we provide an explicit construction of higher order digital nets which achieve the best possible order of convergence without requiring a random element.
Our main idea is to exploit both the decay and the sparsity of the Walsh coefficients of the reproducing kernel simultaneously.
The decay of the Walsh coefficients can be exploited by digit interlacing composition due to Dick \cite{Dick07,Dick08}, whereas the sparsity of the Walsh coefficients can be exploited by digital nets with large minimum Hamming and Niederreiter-Rosenbloom-Tsfasman (NRT) metrics due to Chen and Skriganov \cite{CS02,Skr06}.
Hence our construction is simply given by applying Dick's digit interlacing composition to Chen-Skriganov's digital nets, which shall be discussed in detail in Section~\ref{sec:constr}.
Historically, Chen-Skriganov's digital nets are the first explicit construction of QMC point sets for any dimension $s$ with the best possible $L_p$ discrepancy for each $1<p<\infty$, and their approach is indeed to exploit the sparsity of the Walsh coefficients of the characteristic function of axes-parallel rectangles anchored in zero.
In this paper, instead of the characteristic function, we shall deal with the Walsh coefficients of Bernoulli polynomials as studied in \cite{BD09,Dick09,SYwalsh}, see also \cite{Ywalsh} from which a similar result can be derived.

We now state the main result of this paper.
Here $\Hcal_{\alpha,s}$ denotes an $s$-variate Sobolev space with smoothness $\alpha\in \NN$, $\alpha\geq 2$. 
(We shall give the precise definition of $\Hcal_{\alpha,s}$ later in Subsection~\ref{subsec:wSobolev}.)

\begin{theorem}\label{thm:main}
Let $s,\alpha\in \NN$, $\alpha \geq 2$.
Let $\beta,g\in \NN$ with $\beta \geq 2\alpha$, $g\geq 2\alpha s$ and $g\geq \lfloor s(\beta -1)/2\rfloor$, and let $b\geq \beta gs$ be a prime.
Then for every $w\in \NN$, we can explicitly construct a point set $P$ of cardinality $N=b^{gw}$ such that
\begin{align*}
e^{\wor}(\Hcal_{\alpha,s};P) \leq C_{\alpha,\beta,b,s}\frac{(\log N)^{(s-1)/2}}{N^{\alpha}},
\end{align*}
where $C_{\alpha,\beta,b,s}$ is positive and independent of $w$.
\end{theorem}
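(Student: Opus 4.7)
The approach is to reduce $e^{\wor}(\Hcal_{\alpha,s};P)^2$ to a Walsh sum over the dual net of an underlying digital net in dimension $\beta s$, and then show that the \emph{sparsity} of the Walsh coefficients of the reproducing kernel (exploited via the Chen--Skriganov Hamming/NRT distance property) combines with their \emph{decay} (exploited via Dick's digit interlacing) to yield the $(\log N)^{(s-1)/2}/N^{\alpha}$ bound.

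First, I would write $P$ as the image of an explicit $b$-ary digital net $\widetilde{P}\subset[0,1)^{\beta s}$ under Dick's digit interlacing with interlacing factor $\beta$. Expanding the reproducing kernel of $\Hcal_{\alpha,s}$ into a Walsh series in base $b$ and using the character-sum property of digital nets, a now-standard calculation gives
\begin{align*}
\bigl(e^{\wor}(\Hcal_{\alpha,s};P)\bigr)^2 \;=\; \sum_{\bsk \in \widetilde{P}^{\perp}\setminus\{\bszero\}} \widehat{K}(\bsk),
\end{align*}
where $\widetilde{P}^{\perp}\subset \NN_0^{\beta s}$ is the dual net of $\widetilde{P}$ and $\widehat{K}(\bsk)$ is the Walsh coefficient of the kernel pulled back through the interlacing map, with indices $\bsk=(\bsk_1,\ldots,\bsk_s)$ split into $s$ blocks of $\beta$ coordinates each.

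Next, I would invoke the sharp bounds on Walsh coefficients of Bernoulli polynomials developed in \cite{BD09,Dick09,SYwalsh}: after interlacing, each non-zero $\widehat{K}(\bsk)$ is bounded by $C\,b^{-\alpha\sum_{j\in u}\mu(\bsk_j)}$ with $u=\{j:\bsk_j\neq\bszero\}$ and $\mu$ the NRT weight (\emph{decay}), and it vanishes unless the non-zero digits within each block $\bsk_j$ obey a prescribed concentration pattern, keeping the total Hamming support of $\bsk_j$ at most $\beta$ (\emph{sparsity}). The hypothesis $\beta\ge 2\alpha$ ensures the decay is sharp enough to realise the full smoothness $\alpha$.

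Finally, I would combine these estimates with the Chen--Skriganov property of $\widetilde{P}$: under the hypotheses $b\ge \beta gs$ prime, $g\ge \lfloor s(\beta-1)/2\rfloor$ and $N=b^{gw}$, the number of $\bsk\in\widetilde{P}^{\perp}$ of NRT weight exactly $T$ admits a bound of the form $C_{s,\beta}\,N^{-1}\,b^T\,T^{(\beta s-1)/2}$ once $T$ exceeds the minimum distance (of order $gw$). Splitting the Walsh sum according to the support set $u\subset\{1,\ldots,s\}$, the sparsity constraint restricts the counted elements so that the polynomial factor collapses from $T^{(\beta|u|-1)/2}$ to $T^{(|u|-1)/2}$; the geometric sum in $T$ is dominated by its initial term thanks to $\alpha<\beta$, and after translating the minimum distance into $\log N$ the dominant contribution from $|u|=s$ produces the stated bound.

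\textbf{Main obstacle.} The delicate point is matching the exponent of $\log N$ to precisely $(s-1)/2$ rather than the naive $(\beta s-1)/2$ coming from the ambient dimension of $\widetilde{P}$. Neither tool alone suffices: the Chen--Skriganov enumeration in dimension $\beta s$ gives exponent $(\beta s-1)/2$, while Dick-style interlacing without sparsity control gives only $s-1$. The technical conditions $g\ge 2\alpha s$ and $g\ge \lfloor s(\beta-1)/2\rfloor$ enter here precisely to guarantee that the NRT weights of sparsity-admissible Walsh indices fall inside the regime where the Chen--Skriganov enumeration is valid, and the bookkeeping that turns the $(\beta|u|-1)/2$ exponent into $(|u|-1)/2$ via the Hamming-support restriction is the technical heart of the proof.
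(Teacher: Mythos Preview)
Your proposal has a structural gap at the very first step. For the non-periodic Sobolev space $\Hcal_{\alpha,s}$ the reproducing kernel $\Kcal_{\alpha,s}(\bsx,\bsy)$ is \emph{not} digitally shift-invariant, so the standard calculation does not produce a single Walsh sum over one dual index. The correct starting point (as in \cite[Proof of Theorem~15]{BD09}) is the \emph{double} sum
\[
\bigl(e^{\wor}(\Hcal_{\alpha,s};P)\bigr)^2 \;=\; \sum_{\bsk,\bsl\in P_\infty^\perp\setminus\{\bszero\}} \hat{\Kcal}_{\alpha,s}(\bsk,\bsl),
\]
with the two-variable Walsh coefficients $\hat{\Kcal}_{\alpha,s}(\bsk,\bsl)$. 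One must then split this into a main part over $P^\perp\times P^\perp$ and a discretization part accounting for $P_\infty^\perp\setminus P^\perp$; the latter is handled separately and shown to be of lower order. Your sketch has no mechanism for this split, and your single-sum formula would only be valid for a shift-invariant (e.g.\ Korobov) kernel.

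More importantly, the role you assign to sparsity is not the one that actually drives the argument. The sparsity statement one proves is two-index: $\hat{\Kcal}_{\alpha,s}(\bsk,\bsl)=0$ whenever $\varkappa(\bsk\ominus\bsl)>2\alpha s$ (this comes from analysing $\hat{b}_\tau(k)\overline{\hat{b}_\tau(l)}$ and $\hat{b}_{2\alpha,\per}(k,l)$ separately). Since the interlaced Chen--Skriganov net satisfies $\varkappa(P)\ge g+1\ge 2\alpha s+1$, \emph{every} off-diagonal pair $(\bsk,\bsl)$ with $\bsk\neq\bsl$ in $P^\perp\times P^\perp$ is killed. The main part thus collapses to the diagonal $\sum_{\bsk\in P^\perp\setminus\{\bszero\}}|\hat{\Kcal}_{\alpha,s}(\bsk,\bsk)|\le C\sum_{\bsk}b^{-2\mu_\alpha(\bsk)}$, and from there the interpolation inequality $\mu_\alpha\ge A_{\alpha\beta}\mu_\beta+B_{\alpha\beta}\mu_1$ together with the Skriganov counting lemma in dimension $s$ gives the $(\log N)^{s-1}/N^{2\alpha}$ bound directly. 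There is no step in which an exponent $(\beta|u|-1)/2$ is ``collapsed'' to $(|u|-1)/2$ via a Hamming-support restriction on a single index; the condition $g\ge 2\alpha s$ is used solely to guarantee $\varkappa(P)>2\alpha s$ so that the off-diagonal annihilation works.
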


This theorem implies that the convergence of order $N^{-\alpha}(\log N)^{(s-1)/2}$ can be achieved in this function space. As already mentioned, this order is actually best possible, as can be seen from the lower bound on $e^{\wor}(\Hcal_{\alpha,s};P,\bsw)$ for any $P$ and $\bsw$, see \cite[Proposition~1]{GSYexist}.
The same order of convergence can be also achieved in a similar function space by using the Frolov lattice rule in conjunction with periodization strategy \cite{Frolov76,Ullrich14,UU15}.
The Frolov lattice rule is an equal-weight quadrature rule, although the weights do not sum up to 1 in general.
Therefore, the Frolov lattice rule is not a QMC rule.
To our best knowledge, our presented construction gives the first QMC rule which achieves the best possible convergence in $\Hcal_{\alpha,s}$.

The remainder of this paper is organized as follows.
In Section~\ref{sec:pre}, we shall introduce the necessary background and notation, including digital nets, Sobolev spaces, and Walsh functions.
In Section~\ref{sec:constr}, after reviewing Chen-Skriganov's digital nets and Dick's digit interlacing composition, we shall give an explicit construction of point sets which achieve the best possible convergence in $\Hcal_{\alpha,s}$.
Finally in Section~\ref{sec:proof}, we shall give the proof of the main result.

\section{Preliminaries}\label{sec:pre}
Throughout this paper we shall use the following notation.
Let $\NN$ be the set of positive integers and $\NN_0=\NN\cup \{0\}$.
Let $\CC$ be the set of all complex numbers.
For a prime $b$, let $\FF_b$ be the finite field with $b$ elements, which is identified with the set $\{0,1,\ldots,b-1\}$ equipped with addition and multiplication modulo $b$.
The operators $\oplus$ and $\ominus$ denote digitwise addition and subtraction modulo $b$, respectively, that is, for $k=\sum_{i=1}^{\infty}\kappa_ib^{i-1}\in \NN_0$ and $k'=\sum_{i=1}^{\infty}\kappa'_ib^{i-1}\in \NN_0$ with $\kappa_i,\kappa'_i\in \FF_b$, which are actually finite expansions, we define 
\begin{align*}
k\oplus k' := \sum_{i=1}^{\infty}\lambda_ib^{i-1}\quad \text{and}\quad k\ominus k' := \sum_{i=1}^{\infty}\lambda'_ib^{i-1} ,
\end{align*}
where $\lambda_i=\kappa_i+\kappa'_i \pmod b$ and $\lambda'_i=\kappa_i-\kappa'_i \pmod b$.
In case of vectors in $\NN_0^s$, the operators $\oplus$ and $\ominus$ are applied componentwise.

\subsection{Digital nets}
Here we introduce the definition of digital nets over $\FF_b$ due to Niederreiter \cite{Niebook}.
\begin{definition}
For a prime $b$ and $s,m,n\in \NN$, let $C_1,\ldots,C_s\in \FF_b^{n\times m}$. For each integer $0\leq h< b^m$, denote the $b$-adic expansion of $h$ by $h=\eta_0+\eta_1b+\cdots +\eta_{m-1}b^{m-1}$ with $\eta_0,\eta_1,\ldots,\eta_{m-1}\in \FF_b$. For $1\leq j\leq s$, consider 
\begin{align*}
x_{h,j}=\frac{\xi_{1,h,j}}{b}+\frac{\xi_{2,h,j}}{b^2}+\cdots +\frac{\xi_{n,h,j}}{b^n}\in [0,1) ,
\end{align*}
where $\xi_{1,h,j},\xi_{2,h,j},\ldots,\xi_{n,h,j}\in \FF_b$ are given by
\begin{align*}
(\xi_{1,h,j},\xi_{2,h,j},\ldots,\xi_{n,h,j})^{\top} = C_j\cdot (\eta_0,\eta_1,\ldots,\eta_{m-1})^{\top}.
\end{align*}
Then the set $P=\{\bsx_0,\bsx_1,\ldots,\bsx_{b^m-1}\}\subset [0,1)^s$ with $\bsx_h=(x_{h,1},\ldots,x_{h,s})$ is called a \emph{digital net over $\FF_b$} with generating matrices $C_1,\ldots,C_s$.
\end{definition}

The concept of dual nets shall play a crucial role in our subsequent analysis.
We give two different notions of dual nets below.
\begin{definition}
For a prime $b$ and $s,m,n\in \NN$, let $P$ be a digital net over $\FF_b$ with generating matrices $C_1,\ldots,C_s\in \FF_b^{n\times m}$. We define
\begin{align*}
P^{\perp} := \left\{ \bsk\in \{0,1,\ldots,b^n-1\}^s \colon C_1^{\top}\tr_n(k_1)\oplus \cdots \oplus C_s^{\top}\tr_n(k_s)=\bszero \in \FF_b^m \right\}
\end{align*}
and
\begin{align*}
P_{\infty}^{\perp} := \left\{ \bsk\in \NN_0^s\colon C_1^{\top}\tr_n(k_1)\oplus \cdots \oplus C_s^{\top}\tr_n(k_s)=\bszero \in \FF_b^m \right\},
\end{align*}
where $\bsk=(k_1,\ldots,k_s)$ and we denote $\tr_n(k)=(\kappa_0,\ldots,\kappa_{n-1})^{\top}\in \FF_b^{n}$ for $k\in \NN_0$ with $b$-adic expansion $k=\kappa_0+\kappa_1 b+\cdots$, which is actually a finite expansion.
\end{definition}
\noindent Here we note the difference between $P^{\perp}$ and $P_{\infty}^{\perp}$.
Obviously we have $P^{\perp}\subseteq \{0,1,\ldots,b^n-1\}^s$, $P_{\infty}^{\perp}\subseteq \NN_0^s$, $P^{\perp}\subset P_{\infty}^{\perp}$ and $P_{\infty}^{\perp}\setminus P^{\perp}\subseteq \NN_0^s\setminus \{0,1,\ldots,b^n-1\}^s$ for any digital net $P$ with finite $n$.
The case $n=\infty$, where the difference between $P^{\perp}$ and $P_{\infty}^{\perp}$ vanishes, has been discussed in \cite{GSYtent2}, although we shall only consider the case where $n$ is finite in this paper.

We introduce two metric functions on $\NN_0$: the Hamming metric and the Dick metric. Note that the Dick metric introduced in \cite{Dick07,Dick08} is a generalization of the NRT metric introduced in \cite{Nied86,RT97}, which itself is a generalization of the Hamming metric.
\begin{definition}
For $k\in \NN$, we denote its $b$-adic expansion by $k=\kappa_1 b^{a_1-1}+\cdots +\kappa_v b^{a_v-1}$ with $\kappa_1,\ldots,\kappa_v\in \{1,\ldots,b-1\}$ and $a_1>\cdots >a_v>0$. For $k=0$, we assume that $v=0$ and $a_0=0$.
\begin{enumerate}
\item The Hamming metric $\varkappa:\NN_0\to \RR$ is defined as the number of non-zero digits in the $b$-adic expansion, i.e., $\varkappa(k):=v$.
\item Let $\alpha \in \NN$. The Dick metric $\mu_{\alpha}:\NN_0\to \RR$ is defined by
\begin{align*}
\mu_{\alpha}(k) := \sum_{i=1}^{\min(v,\alpha)}a_i.
\end{align*}
In particular, the metric $\mu_{1}$ is called the NRT metric.
\end{enumerate}
In case of a vector $\bsk=(k_1,\ldots,k_s)\in \NN_0^s$, we define
\begin{align*}
\varphi(\bsk) := \sum_{j=1}^{s}\varphi(k_j),
\end{align*}
for $\varphi\in \{\varkappa,\mu_{\alpha}\}$.
\end{definition}

For a digital net $P$, we define its minimum metric by
\begin{align*}
\varphi(P)  := \min_{\substack{\bsk,\bsl\in P^{\perp}\\ \bsk\ne \bsl}}\varphi(\bsk\ominus \bsl) = \min_{\bsk\in P^{\perp}\setminus \{\bszero\}}\varphi(\bsk),
\end{align*}
for $\varphi\in \{\varkappa,\mu_{\alpha}\}$.
In the above, the latter equality stems from the fact that $P^{\perp}$ is a subgroup of $\{0,1,\ldots,b^n-1\}^s$ with the group operation $\oplus$.
Roughly speaking, the minimum metric of $P$ measures how uniformly $P$ is distributed in $[0,1)^s$.
For instance, explicit constructions due to Sobol' \cite{Sobol67}, Faure \cite{Faure82}, Niederreiter \cite{Nied88}, and Niederreiter and Xing \cite{NXbook} provide digital nets with large minimum NRT metric and the star discrepancy of such digital nets is known to decay with order $(\log N)^{s-1}/N$.
We refer to \cite[Section~8]{DPbook} for more information on these constructions.

\subsection{Sobolev spaces}\label{subsec:wSobolev}
Here we introduce the function space which we deal with in this paper.
First let us consider the univariate case.
For a given $\alpha\in \NN$, $\alpha\geq 2$, the Sobolev space with smoothness $\alpha$ which we consider is given by
\begin{align*}
 \Hcal_{\alpha} & := \Big\{f \colon [0,1)\to \RR \mid \\
 & \qquad f^{(r)} \colon \text{absolutely continuous for $r=0,\ldots,\alpha-1$}, f^{(\alpha)}\in L^2[0,1)\Big\},
\end{align*}
where $f^{(r)}$ denotes the $r$-th derivative of $f$.
As in \cite[Section~10.2]{Wbook} this space is indeed a reproducing kernel Hilbert space with the reproducing kernel $\Kcal_{\alpha}\colon [0,1)\times [0,1)\to \RR$ and the inner product $\langle \cdot, \cdot \rangle_{\alpha}$ given as follows: 
\begin{align*}
 \Kcal_{\alpha}(x,y) = \sum_{r=0}^{\alpha}\frac{B_r(x)B_r(y)}{(r!)^2}+(-1)^{\alpha+1}\frac{B_{2\alpha}(|x-y|)}{(2\alpha)!} ,
\end{align*}
for $x,y\in [0,1)$, where $B_r$ denotes the Bernoulli polynomial of degree $r$, and
\begin{align*}
 \langle f, g \rangle_{\alpha} = \sum_{r=0}^{\alpha-1}\int_{0}^{1}f^{(r)}(x)\, \rd x \int_{0}^{1}g^{(r)}(x)\, \rd x + \int_{0}^{1}f^{(\alpha)}(x)g^{(\alpha)}(x)\, \rd x,
\end{align*}
for $f,g\in \Hcal_{\alpha}$.

In the $s$-variate case, we consider the $s$-fold tensor product space of the one-dimensional space introduced above.
That is, the Sobolev space $\Hcal_{\alpha,s}$ which we consider is simply given by $\Hcal_{\alpha,s}=\bigotimes_{j=1}^{s} \Hcal_{\alpha}$.
Then it is known from \cite[Section~8]{Aro50} that the reproducing kernel of the space $\Hcal_{\alpha,s}$ is the product of the reproducing kernels for the one-dimensional space $\Hcal_{\alpha}$.
Therefore, $\Hcal_{\alpha,s}$ is the reproducing kernel Hilbert space whose reproducing kernel $\Kcal_{\alpha,s}\colon [0,1)^s\times [0,1)^s\to \RR$ and inner product $\langle \cdot, \cdot \rangle_{\alpha,s}$ are given as follows:
\begin{align*}
 \Kcal_{\alpha,s}(\bsx,\bsy) = \prod_{j=1}^{s}\Kcal_{\alpha}(x_j,y_j) ,
\end{align*}
for $\bsx=(x_1,\ldots,x_s),\bsy=(y_1,\ldots,y_s)\in [0,1)^s$, and
\begin{align*}
 \langle f, g \rangle_{\alpha,s} & = \sum_{u\subseteq \{1,\ldots,s\}}\sum_{\bsr_u\in \{0,\ldots,\alpha-1\}^{|u|}} \int_{[0,1)^{s-|u|}} \\
 & \qquad \left(\int_{[0,1)^{|u|}}f^{(\bsr_u,\bsalpha)}(\bsx)\, \rd \bsx_u\right) \left(\int_{[0,1)^{|u|}} g^{(\bsr_u,\bsalpha)}(\bsx) \, \rd \bsx_u\right) \, \rd \bsx_{\{1,\ldots,s\}\setminus u} ,
\end{align*}
for $f,g\in \Hcal_{\alpha,s}$, where we use the following notation: For $u\subseteq \{1,\ldots,s\}$ and $\bsx\in [0,1)^s$, we write $\bsx_u=(x_j)_{j\in u}$.
Moreover, for $\bsr_u=(r_j)_{j\in u}\in \{0,\ldots,\alpha-1\}^{|u|}$, $(\bsr_u,\bsalpha)$ denotes the $s$-dimensional vector whose $j$-th component is $r_j$ if $j\in u$, and $\alpha$ otherwise.
Note that an integral and sum over the empty set is the identity operator.

Note that the so-called weight parameters, or more simply the weights, are not taken account of in the definition of $\Hcal_{\alpha,s}$.
As in \cite{NWbook,SW98}, the weights moderate the importance of different variables or groups of variables in function spaces and play an important role in the study of tractability.
However, such an investigation is out of the scope of this paper since we are interested in showing the optimal exponent of $\log N$ term in the error bound.

\subsection{Walsh functions}
The system of Walsh functions is the key tool for the error analysis of digital nets.
We refer to \cite[Appendix~A]{DPbook} for comprehensive information on Walsh functions in the context of QMC integration.
First let us define the one-dimensional Walsh functions.

\begin{definition}
For $b\in \NN$, $b\geq 2$, let $\omega_b:=\exp(2\pi \sqrt{-1}/b)$. For $k\in \NN_0$, we denote its $b$-adic expansion by $k=\kappa_0+\kappa_1 b+\cdots$, which is actually a finite expansion. The $k$-th $b$-adic Walsh function ${}_b\wal_k\colon [0,1)\to \{1,\omega_b,\ldots,\omega_b^{b-1}\}$ is defined by
\begin{align*}
{}_b\wal_k(x) := \omega_b^{\kappa_0 \xi_1+\kappa_1 \xi_2+\cdots},
\end{align*}
where we denote the $b$-adic expansion of $x\in [0,1)$ by $x=\sum_{i=1}^{\infty}\xi_ib^{-i}$ with $\xi_i\in \FF_b$, which is understood to be unique in the sense that infinitely many of the $\xi_i$'s are different from $b-1$.

\end{definition}
\noindent 
The above definition can be extended to the high-dimensional case as follows.
\begin{definition}
For $b\in \NN$, $b\geq 2$ and $\bsk=(k_1,\ldots,k_s)\in \NN_0^s$, the $\bsk$-th $b$-adic Walsh function ${}_b\wal_{\bsk}\colon [0,1)^s\to \{1,\omega_b,\ldots,\omega_b^{b-1}\}$ is defined by
\begin{align*}
{}_b\wal_{\bsk}(\bsx) := \prod_{j=1}^{s}{}_b\wal_{k_j}(x_j) .
\end{align*}
\end{definition}
\noindent
Since we shall always use Walsh functions in a fixed prime base $b$, we omit the subscript and simply write $\wal_k$ or $\wal_{\bsk}$.

As in \cite[Theorem~A.11]{DPbook}, the Walsh system $\{\wal_{\bsk}\colon \bsk\in \NN_0^s\}$ is a complete orthonormal system in $L^2([0,1)^s)$ for any $s\in \NN$.
Thus, we can define the Walsh series of $f\in L^2([0,1)^s)$ by
\begin{align*}
\sum_{\bsk\in \NN_0^s}\hat{f}(\bsk)\wal_{\bsk}(\bsx) ,
\end{align*}
where $\hat{f}(\bsk)$ denotes the $\bsk$-th Walsh coefficient of $f$ defined by
\begin{align*}
\hat{f}(\bsk) := \int_{[0,1)^s}f(\bsx)\overline{\wal_{\bsk}(\bsx)}\rd \bsx.
\end{align*}
We refer to \cite[Appendix~A.3]{DPbook} and \cite[Lemma~18]{GSYtent1} for a discussion on the pointwise absolute convergence of the Walsh series.
In fact, regarding the reproducing kernel $\Kcal_{\alpha,s}$ introduced before, we have the pointwise absolute convergence of the Walsh series, i.e., we have
\begin{align*}
 \Kcal_{\alpha,s}(\bsx,\bsy) = \sum_{\bsk,\bsl\in \NN_0^s}\hat{\Kcal}_{\alpha,s}(\bsk,\bsl)\wal_{\bsk}(\bsx)\overline{\wal_{\bsl}(\bsy)},
\end{align*}
for any $\bsx,\bsy\in [0,1)^s$, where we define
\begin{align*}
 \hat{\Kcal}_{\alpha,s}(\bsk,\bsl) := \int_{[0,1)^{2s}}\Kcal_{\alpha,s}(\bsx,\bsy)\overline{\wal_{\bsk}(\bsx)}\wal_{\bsl}(\bsy) \rd \bsx \rd \bsy ,
\end{align*}
for $\bsk,\bsl\in \NN_0^s$.

\section{Explicit construction of point sets}\label{sec:constr}
In this section, we first recall a construction of digital nets due to Chen and Skriganov and digit interlacing composition due to Dick in Subsections~\ref{subsec:cs_constr} and \ref{subsec:interlace}, respectively.
Then we provide an explicit construction of point sets which achieve the best possible convergence in $\Hcal_{\alpha,s}$.

\subsection{Chen and Skriganov's construction}\label{subsec:cs_constr}
The key property of Chen-Skriganov's digital nets is that both the minimum Hamming and NRT metrics are large enough simultaneously.
In what follows we first introduce a construction of digital nets due to Chen and Skriganov \cite{CS02,Skr06} by following the exposition of \cite[Section~16.4]{DPbook} and then provide its key property.

Let $s,g,w\in \NN$ and $b\geq gs$ be a prime.
Let $\{\beta_{j,l}\}_{1\leq j\leq s, 1\leq l\leq g}$ with $\beta_{j,l}\in \FF_b$ be a set of $gs$ distinct elements.
An explicit construction of digital nets over $\FF_b$ consisting of $b^{gw}$ points due to Chen and Skriganov is given by the generating matrices $C_1,\ldots,C_s\in \FF_b^{gw\times gw}$ which are defined by
\begin{align*}
C_j = (c^{(j)}_{u,v})_{u,v=1,\ldots,gw} ,
\end{align*}
where 
\begin{align*}
c^{(j)}_{(l-1)w+i,v} = \binom{v-1}{i-1}\beta_{j,l}^{v-i} ,
\end{align*}
for $1\leq i\leq w$, $1\leq l\leq g$, $1\leq v\leq gw$ and $1\leq j\leq s$.
In the above, $\binom{i}{j}$ denotes a binomial coefficient modulo $b$ and we use the usual conventions that $\binom{i}{j}=0$ whenever $i<j$ and that $0^0=1$.
This construction can be regarded as a generalization of the construction of digital nets due to Faure \cite{Faure82}, who studied the case $g=1$.

The following key property is obtained in \cite[Lemma~2E]{CS02}, see also \cite[Theorem~16.28]{DPbook}.
\begin{lemma}\label{lem:cs_dual}
Let $s,g,w\in \NN$ and $b\geq gs$ be a prime.
Let $P$ be a digital net over $\FF_b$ consisting of $b^{gw}$ points due to Chen and Skriganov.
Then we have
\begin{align*}
\varkappa(P) \geq g+1\quad \text{and}\quad \mu_1(P)\geq gw+1 .
\end{align*}
\end{lemma}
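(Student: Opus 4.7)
The plan is to reformulate the condition $\bsk\in P^{\perp}$ as a polynomial identity and then, for each of the two bounds, produce an explicit low-degree polynomial that violates the identity whenever the corresponding metric is too small. The key algebraic observation is $c^{(j)}_{(l-1)w+i,v} = \binom{v-1}{i-1}\beta_{j,l}^{v-i} = (D^{(i-1)}y^{v-1})\big|_{y=\beta_{j,l}}$, where $D^{(k)}$ denotes the $k$-th Hasse derivative (which substitutes for $f^{(k)}/k!$ and behaves correctly even when $k!\equiv 0\pmod b$). Writing $\tr_{gw}(k_j)=(\kappa_{j,1},\ldots,\kappa_{j,gw})^{\top}$ and reading the equation $\sum_j C_j^{\top}\tr_{gw}(k_j)=\bszero$ coefficient by coefficient in $v$, linearity converts $\bsk\in P^{\perp}$ into the statement that
\[
L_{\bsk}(F) := \sum_{j=1}^{s}\sum_{l=1}^{g}\sum_{i=1}^{w} \kappa_{j,(l-1)w+i}\,(D^{(i-1)}F)(\beta_{j,l})
\]
vanishes for every $F\in\FF_b[y]$ with $\deg F\leq gw-1$. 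Each bound then reduces, via the contrapositive, to exhibiting an $F$ with $L_{\bsk}(F)\neq 0$.

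For the Hamming bound, assume $\bsk\in P^{\perp}\setminus\{\bszero\}$ has $\varkappa(\bsk)\leq g$. Let $S=\{(j,l,i):\kappa_{j,(l-1)w+i}\neq 0\}$, let $T$ be its projection to $(j,l)$-pairs, and set $i^{\ast}_{j,l}=\max\{i:(j,l,i)\in S\}$. Pick any $(j_0,l_0)\in T$ and take
\[
F(y) = (y-\beta_{j_0,l_0})^{i^{\ast}_{j_0,l_0}-1}\prod_{(j,l)\in T\setminus\{(j_0,l_0)\}}(y-\beta_{j,l})^{i^{\ast}_{j,l}}.
\]
Since $|T|\leq|S|\leq g$ and each $i^{\ast}_{j,l}\leq w$, one has $\deg F=\sum_{(j,l)\in T}i^{\ast}_{j,l}-1\leq gw-1$. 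By the Leibniz rule for Hasse derivatives, $(D^{(i-1)}F)(\beta_{j,l})=0$ for every $(j,l,i)\in S$ apart from $(j_0,l_0,i^{\ast}_{j_0,l_0})$; at the exceptional triple it equals $\prod_{(j,l)\in T\setminus\{(j_0,l_0)\}}(\beta_{j_0,l_0}-\beta_{j,l})^{i^{\ast}_{j,l}}$, which is non-zero because the $\beta_{j,l}$ are distinct in $\FF_b$. Hence $L_{\bsk}(F)\neq 0$.

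The NRT bound uses the same strategy tailored to leading digits. Suppose $\bsk\in P^{\perp}\setminus\{\bszero\}$ has $\mu_1(\bsk)\leq gw$, let $T'=\{j:k_j\neq 0\}$, and for $j\in T'$ write the leading-digit position of $k_j$ as $a_j=(l_j-1)w+i_j$ with $i_j\in\{1,\ldots,w\}$. Pick any $j_0\in T'$ and set
\[
F(y) = (y-\beta_{j_0,l_{j_0}})^{i_{j_0}-1}\prod_{l<l_{j_0}}(y-\beta_{j_0,l})^{w}\cdot\prod_{j\in T'\setminus\{j_0\}}\bigl[(y-\beta_{j,l_j})^{i_j}\prod_{l<l_j}(y-\beta_{j,l})^{w}\bigr].
\]
Then $\deg F=\sum_{j\in T'}a_j-1\leq gw-1$, and the vanishing orders are arranged so that $(D^{(i-1)}F)(\beta_{j,l})=0$ at every position $(l,i)$ lying in the ``support box'' $\{(l,i):(l-1)w+i\leq a_j\}$ of $k_j$, with the single exception $(j_0,l_{j_0},i_{j_0})$, at which the Hasse derivative is again a non-zero product of differences of distinct $\beta$'s. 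Thus $L_{\bsk}(F)\neq 0$, contradicting $\bsk\in P^{\perp}$.

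The main obstacle is the bookkeeping involved in choosing the vanishing orders so that $\deg F\leq gw-1$ holds in each case while leaving exactly one non-zero Hasse-derivative contribution to $L_{\bsk}$; for the Hamming case this uses $|T|\leq g$ together with $i^{\ast}_{j,l}\leq w$, and for the NRT case it uses $\sum_{j\in T'}a_j\leq gw$. Working with Hasse derivatives rather than ordinary ones is what lets the argument go through for all primes $b\geq gs$ permitted by the hypothesis, regardless of whether $b>w$.
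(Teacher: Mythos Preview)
The paper does not actually prove this lemma; it simply cites \cite[Lemma~2E]{CS02} and \cite[Theorem~16.28]{DPbook}. Your argument is correct and is essentially the standard confluent-Vandermonde proof found in those references: the rows of the $C_j$ are Hasse derivatives of monomials evaluated at the nodes $\beta_{j,l}$, so membership in $P^{\perp}$ says that the functional $L_{\bsk}$ annihilates all polynomials of degree $<gw$, and one refutes this by building a polynomial with prescribed vanishing orders at the $\beta_{j,l}$. Your explicit use of Hasse derivatives (rather than ordinary derivatives divided by factorials) is the right way to make the argument uniform over all primes $b\geq gs$, including those with $b\leq w$; this is exactly the device used in \cite[Section~16.4]{DPbook}. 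The degree bookkeeping in both cases is sound: in the Hamming case $\sum_{(j,l)\in T} i^{\ast}_{j,l}\leq |T|\cdot w\leq gw$ because $|T|\leq|S|=\varkappa(\bsk)\leq g$, and in the NRT case $\deg F=\sum_{j\in T'}a_j-1\leq gw-1$ directly from $\mu_1(\bsk)\leq gw$. The only remark I would add is that in the NRT polynomial you are implicitly using that the nodes $\beta_{j,l}$ for different pairs $(j,l)$ are distinct, so that the factors attached to one coordinate $j$ do not accidentally raise the vanishing order at a node belonging to another coordinate; this is guaranteed by the hypothesis that $\{\beta_{j,l}\}$ consists of $gs$ distinct elements of $\FF_b$.
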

\subsection{Dick's digit interlacing composition}\label{subsec:interlace}
Here we recall digit interlacing composition due to Dick \cite{Dick08,Dick09} to construct digital nets with large minimum $\mu_{\alpha}$ metric (for a given integer $\alpha\geq 2$) from digital nets with large minimum NRT metric.
First we introduce the definition of order $\alpha$ digital $(t,m,s)$-net over $\FF_b$ for $\alpha \geq 1$.

\begin{definition}
For a prime $b$ and $s,\alpha,m,n\in \NN$ with $n\geq \alpha m$, let $P$ be a digital net over $\FF_b$ with generating matrices $C_1,\ldots,C_s\in \FF_b^{n\times m}$.
We denote by $\bsc_{i,j}\in \FF_b^m$ the $i$-th row vector of $C_j$ for $1\leq i\leq n$ and $1\leq j\leq s$.
Let $t$ be an integer with $0\leq t\leq \alpha m$ which satisfies the following condition:
For all $1\leq i_{j,v_j}<\cdots <i_{j,1}\leq n$ such that
\begin{align*}
\sum_{j=1}^{s} \sum_{l=1}^{\min(\alpha, v_j)} i_{j,l} \leq \alpha m -t ,
\end{align*}
the vectors $\bsc_{i_{1,v_1},1},\ldots,\bsc_{i_{1,1},1},\ldots,\bsc_{i_{s,v_s},s},\ldots,\bsc_{i_{s,1},s}$ are linearly independent over $\FF_b$.
Then we call $P$ an order $\alpha$ digital $(t,m,s)$-net over $\FF_b$.
\end{definition}
\noindent 
It follows from the above linear independence of the rows of generating matrices that any order $\alpha$ digital $(t,m,s)$-net $P$ over $\FF_b$ satisfies
\begin{align*}
\mu_{\alpha}(P) \geq \alpha m -t +1.
\end{align*}
Therefore, order $\alpha$ digital $(t,m,s)$-nets with small $t$-value are exactly digital nets with large minimum $\mu_{\alpha}$ metric.
There are many explicit constructions of order 1 digital $(t,m,s)$-nets with small $t$-value for an arbitrary dimension $s$.
We again refer to \cite{Faure82,Nied88,NXbook,Sobol67} as well as \cite[Chapter~8]{DPbook} on such constructions.
Note that, in this light, Chen-Skriganov's digital nets can be seen as order 1 digital $(0,gw,s)$-nets over $\FF_b$ for $g,w\in \NN$ and a prime $b\geq gs$.

In order to construct order $\alpha$ digital $(t,m,s)$-nets with small $t$-value, we now introduce the digit interlacing composition due to Dick:
For a prime $b$ and $s,\alpha,m\in \NN$ with $\alpha \geq 2$, let $Q\subset [0,1)^{\alpha s}$ be a digital net over $\FF_b$ with generating matrices $C_1,\ldots,C_{\alpha s}\in \FF_b^{m\times m}$.
We denote by $\bsc_{i,j}\in \FF_b^m$ the $i$-th row vector of $C_j$ for $1\leq i\leq m$ and $1\leq j\leq \alpha s$.
Now we construct a digital net $P\subset [0,1)^s$ with generating matrices $D_1,\ldots,D_s\in \FF_b^{\alpha m\times m}$ such that the $(\alpha(h-1)+i)$-th row vector of $D_j$ equals $\bsc_{h,\alpha (j-1)+i}$ for $1\leq h\leq m$, $1\leq i\leq \alpha$ and $1\leq j\leq s$.
The key property of this construction algorithm is given as follows, see for instance \cite[Corollary~3.4]{BDP11}.

\begin{lemma}\label{lem:dick_dual}
Let $Q$ be an order 1 digital $(t',m,\alpha s)$-net over $\FF_b$ with $0\leq t'\leq m$.
Then a digital net $P$ constructed as above is an order $\alpha$ digital $(t,m,s)$-net over $\FF_b$ with
\begin{align*}
t = \alpha \min \left\{m, t'+ \left\lfloor \frac{s(\alpha-1)}{2} \right\rfloor\right\} .
\end{align*}
Therefore, $P$ satisfies
\begin{align*}
\mu_{\alpha}(P) \geq \max \left\{0, \alpha \left( m -t'- \left\lfloor \frac{s(\alpha-1)}{2} \right\rfloor\right)\right\}+1.
\end{align*}
\end{lemma}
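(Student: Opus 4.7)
The plan is to verify directly that $P$ is an order $\alpha$ digital $(t,m,s)$-net over $\FF_b$. Fix any row indices $1\le i_{j,v_j}<\cdots<i_{j,1}\le\alpha m$ in each $D_j$ satisfying
\[
\sum_{j=1}^{s}\sum_{l=1}^{\min(\alpha,v_j)}i_{j,l}\le\alpha m-t.
\]
I would translate each chosen row of $D_j$ back to a row of one of the matrices $C_{\alpha(j-1)+1},\ldots,C_{\alpha j}$ of $Q$, and then invoke the order $1$ $(t',m,\alpha s)$-net property of $Q$ to conclude linear independence of all chosen rows.

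Concretely, writing $i_{j,l}=\alpha(h_l^{(j)}-1)+\delta_l^{(j)}$ with $\delta_l^{(j)}\in\{1,\ldots,\alpha\}$, the $i_{j,l}$-th row of $D_j$ is by construction the $h_l^{(j)}$-th row of $C_{\alpha(j-1)+\delta_l^{(j)}}$. For each $k\in\{1,\ldots,\alpha s\}$ at which at least one such row is selected, let $h_k^{\ast}$ denote the largest $h_l^{(j)}$ among the indices mapping to $C_k$; then the order $1$ net property of $Q$ delivers linear independence provided $\sum_{k}h_k^{\ast}\le m-t'$. Letting $\Delta_j$ be the set of distinct values of $\delta_l^{(j)}$ appearing in dimension $j$ and $F_j=\{l^{\ast}_{j,\delta}:\delta\in\Delta_j\}$, where $l^{\ast}_{j,\delta}$ is the smallest $l$ with $\delta_l^{(j)}=\delta$, one has $\sum_{k}h_k^{\ast}=\sum_{j=1}^{s}\sum_{l\in F_j}h_l^{(j)}$.

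The key estimate I would establish, for each $j$, is
\[
\alpha\sum_{l\in F_j}h_l^{(j)}=\sum_{l\in F_j}\bigl(i_{j,l}+\alpha-\delta_l^{(j)}\bigr)\le\sum_{l=1}^{\min(\alpha,v_j)}i_{j,l}+\frac{\alpha(\alpha-1)}{2},
\]
which relies on two observations: the residues $\alpha-\delta_l^{(j)}\in\{0,\ldots,\alpha-1\}$ are pairwise distinct over $l\in F_j$ and hence sum to at most $\alpha(\alpha-1)/2$; and $|F_j|=|\Delta_j|\le\min(\alpha,v_j)$ combined with the strict decrease of $i_{j,l}$ forces $\sum_{l\in F_j}i_{j,l}\le\sum_{l=1}^{\min(\alpha,v_j)}i_{j,l}$. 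Summing over $j$ and substituting $t=\alpha(t'+\lfloor s(\alpha-1)/2\rfloor)$ gives $\alpha\sum_{k}h_k^{\ast}\le\alpha(m-t')+\alpha/2$, and integrality of $\sum_{k}h_k^{\ast}$ promotes this to $\sum_{k}h_k^{\ast}\le m-t'$, which is what is needed. The degenerate case $t=\alpha m$ is vacuous since the sum condition then forces the empty selection, and the claimed lower bound on $\mu_{\alpha}(P)$ is then immediate from the general inequality $\mu_{\alpha}(P)\ge\alpha m-t+1$ already recorded in the excerpt. I expect the main obstacle to be isolating the sharp constant $\alpha(\alpha-1)/2$: a naive pigeonhole over all residues in $\{0,\ldots,\alpha-1\}$ would give only $(\alpha-1)\min(\alpha,v_j)$, which is too weak, and it is exactly the restriction to the first-occurrence set $F_j$ that makes the residues distinct.
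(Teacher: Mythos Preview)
The paper does not supply its own proof of Lemma~\ref{lem:dick_dual}; it simply records the statement and refers to \cite[Corollary~3.4]{BDP11}. So there is nothing in the paper to compare your argument against.

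Your direct verification is correct. The translation of row $i_{j,l}=\alpha(h_l^{(j)}-1)+\delta_l^{(j)}$ of $D_j$ to row $h_l^{(j)}$ of $C_{\alpha(j-1)+\delta_l^{(j)}}$ is exactly what the interlacing construction dictates, and your identification of $h_k^\ast$ with $h_{l^\ast_{j,\delta}}^{(j)}$ is right because, for fixed $\delta$, the indices $i_{j,l}$ with $\delta_l^{(j)}=\delta$ differ by multiples of $\alpha$, so the smallest such $l$ gives the largest $h$. The key inequality is handled cleanly: restricting to the first-occurrence set $F_j$ makes the values $\alpha-\delta_l^{(j)}$ distinct elements of $\{0,\dots,\alpha-1\}$, yielding the sharp $\alpha(\alpha-1)/2$, while $|F_j|\le\min(\alpha,v_j)$ together with the monotonicity of $i_{j,l}$ bounds $\sum_{l\in F_j}i_{j,l}$ by the Dick-metric sum. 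The final integrality step is sound: $\sum_k h_k^\ast$ is an integer at most $m-t'+1/2$, hence at most $m-t'$, and the order~$1$ property of $Q$ then gives linear independence of \emph{all} selected rows (not just the top ones), which is precisely what the definition of order~$1$ nets guarantees. The degenerate case and the passage to $\mu_\alpha(P)\ge\alpha m-t+1$ are as you say.
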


Further, we need the so-called propagation property shown in \cite[Theorem~3.3]{Dick07} and \cite[Theorem~4.10]{Dick08} as follows:
\begin{lemma}\label{lem:propagation}
Let $1\leq \alpha'< \alpha$ and $0\leq t\leq \alpha m$.
Any order $\alpha$ digital $(t,m,s)$-net over $\FF_b$ is also an order $\alpha'$ digital $(t',m,s)$-net over $\FF_b$ with $$t'=\lceil t\alpha'/\alpha \rceil.$$
\end{lemma}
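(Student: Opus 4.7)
My plan is to reduce verification of the order $\alpha'$ $(t',m,s)$-net property for $P$ to the order $\alpha$ $(t,m,s)$-net property that $P$ already enjoys. Fix a configuration of row indices $1\leq i_{j,v_j}<\cdots<i_{j,1}\leq n$ for $j=1,\ldots,s$ and set
\begin{align*}
S_\beta := \sum_{j=1}^{s}\sum_{l=1}^{\min(\beta,v_j)} i_{j,l}
\end{align*}
for the weight appearing in the order $\beta$ condition. Since the set of row vectors $\{\bsc_{i_{j,l},j}\}$ selected from the generating matrices depends only on the indices themselves (not on $\beta$), it is enough to show that every configuration satisfying the order $\alpha'$ weight bound $S_{\alpha'}\leq \alpha' m - t'$ also satisfies the order $\alpha$ weight bound $S_\alpha \leq \alpha m - t$. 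The order $\alpha$ hypothesis on $P$ then immediately delivers linear independence of these vectors.

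The central step is the inequality $S_\alpha \leq (\alpha/\alpha')\,S_{\alpha'}$, which I plan to establish coordinate-wise in $j$. The key observation is that $i_{j,1}>i_{j,2}>\cdots>i_{j,v_j}$ are strictly decreasing positive integers, so the running average $(1/k)\sum_{l=1}^{k} i_{j,l}$ of the top $k$ indices is non-increasing in $k$. Comparing this average at $k=\min(\alpha',v_j)$ with its value at $k=\min(\alpha,v_j)$ gives the per-coordinate bound with multiplicative constant $\min(\alpha,v_j)/\min(\alpha',v_j)$. A short case analysis splitting into $v_j<\alpha'$, $\alpha'\leq v_j<\alpha$, and $v_j\geq\alpha$ then shows this constant is at most $\alpha/\alpha'$, so summing over $j$ yields the required global inequality.

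Once the inequality is in hand, combining it with the hypothesis $S_{\alpha'}\leq\alpha' m - t'$ gives
\begin{align*}
S_\alpha \leq \frac{\alpha}{\alpha'}(\alpha' m - t') = \alpha m - \frac{\alpha}{\alpha'}\,t',
\end{align*}
and the choice $t'=\lceil t\alpha'/\alpha\rceil$ guarantees $(\alpha/\alpha')\,t'\geq t$, so $S_\alpha\leq \alpha m - t$, as required. The main obstacle is simply the bookkeeping behind the averaging inequality $\min(\alpha,v_j)/\min(\alpha',v_j)\leq \alpha/\alpha'$; no further structural properties of digital nets are needed beyond the defining linear-independence condition, which is why the argument is essentially combinatorial and goes through uniformly in $s$, $m$, and $b$.
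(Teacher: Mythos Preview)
Your argument is correct. The paper does not actually prove this lemma; it simply quotes the propagation rule from \cite[Theorem~3.3]{Dick07} and \cite[Theorem~4.10]{Dick08}, so there is no in-paper proof to compare against. What you have written is essentially the standard proof of that propagation rule: the only nontrivial ingredient is the per-coordinate inequality
\[
\sum_{l=1}^{\min(\alpha,v_j)} i_{j,l} \;\le\; \frac{\alpha}{\alpha'}\sum_{l=1}^{\min(\alpha',v_j)} i_{j,l},
\]
and your running-average observation (that $k\mapsto \tfrac{1}{k}\sum_{l=1}^{k} i_{j,l}$ is non-increasing for a strictly decreasing positive sequence) handles all three cases cleanly. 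The final step, combining $S_{\alpha}\le(\alpha/\alpha')S_{\alpha'}$ with $S_{\alpha'}\le \alpha' m - t'$ and $t'=\lceil t\alpha'/\alpha\rceil\ge t\alpha'/\alpha$, is exactly right; note also that $t'\le \alpha' m$ since $t\le \alpha m$, so $t'$ lies in the admissible range. Nothing further is needed.
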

\subsection{Our explicit construction}\label{subsec:construction}
As we already mentioned in the first section, our explicit construction of point sets is simply given by applying Dick's digit interlacing composition to Chen-Skriganov's digital nets.
This is done as follows.

Let $s,\beta,g\in \NN$ and $b\geq \beta g s$ be a prime.
Let $\{\beta_{j,l}\}_{1\leq j\leq \beta s, 1\leq l\leq g}$ with $\beta_{j,l}\in \FF_b$ be a set of distinct $\beta gs$ elements.
For $w\in \NN$, we first construct a digital net $Q\subset [0,1)^{\beta s}$ over $\FF_b$ with generating matrices $C_1,\ldots,C_{\beta s}\in \FF_b^{gw\times gw}$ which are given by
\begin{align*}
C_j = (c^{(j)}_{u,v})_{u,v=1,\ldots,gw} ,
\end{align*}
where 
\begin{align*}
c^{(j)}_{(l-1)w+i,v} = \binom{v-1}{i-1}\beta_{j,l}^{v-i} ,
\end{align*}
for $1\leq i\leq w$, $1\leq l\leq g$, $1\leq v\leq gw$ and $1\leq j\leq \beta s$.
Then we construct a digital net $P\subset [0,1)^s$ over $\FF_b$ with generating matrices $D_1,\ldots,D_s\in \FF_b^{\beta gw\times gw}$ by applying the digit interlacing composition to $Q$, that is, $D_1,\ldots,D_s$ are given such that the $(\beta(h-1)+i)$-th row vector of $D_j$ equals $\bsc_{h,\beta (j-1)+i}$ for $1\leq h\leq gw$, $1\leq i\leq \beta$ and $1\leq j\leq s$.

Note that a digital net $P$ constructed as above consists of $b^{gw}$ points for $w\in \NN$. The following important property of $P$ will be crucial in the proof of our main result.
\begin{lemma}\label{lem:gsy_constr}
Let $P$ be a digital net in $[0,1)^s$ constructed as above. Then we have
\begin{align*}
\varkappa(P) \geq g+1 \quad \text{and} \quad \mu_{\beta}(P) \geq \max \left\{0, \beta \left( gw -\left\lfloor \frac{s(\beta-1)}{2} \right\rfloor\right)\right\}+1.
\end{align*}
\end{lemma}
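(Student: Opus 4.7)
The bound on $\mu_\beta(P)$ is essentially an immediate consequence of the two lemmas already established. By Lemma~\ref{lem:cs_dual} applied to the $\beta s$-dimensional Chen-Skriganov digital net $Q$ (whose base $b\geq \beta g s$ satisfies the hypothesis $b\geq g\cdot(\beta s)$), we obtain $\mu_1(Q)\geq gw+1$, which is equivalent to saying that $Q$ is an order $1$ digital $(0,gw,\beta s)$-net over $\FF_b$. Applying Lemma~\ref{lem:dick_dual} with $t'=0$, $\alpha=\beta$, and source dimension $\beta s$ (interlaced down to $s$) then yields exactly the claimed lower bound on $\mu_\beta(P)$.

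The Hamming bound is where the real content lies, and my plan is to relate $P^\perp$ to $Q^\perp$ via digit de-interlacing. For $\bsk=(k_1,\ldots,k_s)\in\NN_0^s$ with $b$-adic expansions $k_j=\sum_{m\geq 0}\kappa_{j,m}b^m$, I will associate the vector $\bsk'\in\NN_0^{\beta s}$ whose $(\beta(j-1)+i)$-th coordinate is
\[k_j^{(i)}:=\sum_{h\geq 1}\kappa_{j,\beta(h-1)+i-1}\,b^{h-1},\qquad 1\leq i\leq\beta,\ 1\leq j\leq s.\]
Because the $(\beta(h-1)+i)$-th row of $D_j$ equals the $h$-th row of $C_{\beta(j-1)+i}$, a direct row-by-row summation gives
\[D_j^\top\tr_{\beta gw}(k_j)=\sum_{i=1}^{\beta}C_{\beta(j-1)+i}^{\top}\tr_{gw}\bigl(k_j^{(i)}\bigr)\in\FF_b^{gw}.\]
Summing over $j$ shows $\bsk\in P^\perp$ if and only if $\bsk'\in Q^\perp$. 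Moreover, since the digits of $k_j$ are partitioned without overlap among $k_j^{(1)},\ldots,k_j^{(\beta)}$, we have $\varkappa(k_j)=\sum_{i=1}^{\beta}\varkappa(k_j^{(i)})$, and hence $\varkappa(\bsk)=\varkappa(\bsk')$.

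Given any nonzero $\bsk\in P^\perp$, the corresponding nonzero $\bsk'\in Q^\perp$ satisfies $\varkappa(\bsk')\geq\varkappa(Q)\geq g+1$ by Lemma~\ref{lem:cs_dual}, and therefore $\varkappa(\bsk)\geq g+1$. Taking the minimum over all nonzero $\bsk\in P^\perp$ establishes $\varkappa(P)\geq g+1$.

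The main obstacle is simply bookkeeping: verifying that the index convention $(\beta(h-1)+i)\leftrightarrow(h,i)$ which defines the digit interlacing really does yield both the $P^\perp$-to-$Q^\perp$ correspondence and the preservation of Hamming weight. Once the indexing is pinned down, both assertions are formal, and the two lower bounds fall out by invoking the Chen-Skriganov and Dick lemmas in sequence.
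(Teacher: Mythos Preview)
Your proof is correct and follows essentially the same approach as the paper: both establish a Hamming-weight-preserving bijection between $P^\perp$ and $Q^\perp$ via digit (de-)interlacing, then invoke Lemma~\ref{lem:cs_dual} for the Hamming bound and Lemma~\ref{lem:dick_dual} for the $\mu_\beta$ bound. The only cosmetic difference is that the paper works with the interlacing map $\Ecal_\beta:Q^\perp\to P^\perp$ while you use its inverse, and the paper records the slightly stronger equality $\varkappa(P)=\varkappa(Q)$ rather than just the inequality you need.
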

\begin{proof}
Let $Q$ be a digital net in $[0,1)^{\beta s}$ constructed as above.
Since $Q$ is nothing but the Chen-Skriganov's digital net, it follows that $Q$ is an order 1 digital $(0,gw,\beta s)$-net, and moreover from Lemma~\ref{lem:cs_dual} we have
\begin{align*}
\varkappa(Q) \geq g+1\quad \text{and}\quad \mu_1(Q)\geq gw+1 .
\end{align*}

First we prove $\varkappa(P)=\varkappa(Q)$, from which the result for the first part follows.
Let $\Ecal_{\beta}: \NN_0^{\beta}\to \NN_0$ be defined by
\begin{align*}
\Ecal_{\beta}(k_1,\ldots,k_{\beta}) := \sum_{a=0}^{\infty}\sum_{j=1}^{\beta}\kappa_{a,j}b^{a\beta +j-1} ,
\end{align*}
where we denote the $b$-adic expansion of $k_j$ by $k_j=\kappa_{0,j}+\kappa_{1,j}b+\cdots$, which is actually a finite expansion.
It is obvious that $\Ecal_{\beta}$ is a bijection between the set $\{0,\ldots,b^m-1\}^{\beta}$ and the set $\{0,\ldots,b^{\beta m}-1\}$ for any $m\in \NN$.
For any $(k_1,\ldots,k_{\beta}) \in \{0,\ldots,b^{gw}-1\}^{\beta}$ we have
\begin{align*}
& \quad C_1^{\top}\tr_{gw}(k_1)\oplus \cdots \oplus C_{\beta}^{\top}\tr_{gw}(k_{\beta}) \\
& = (\bsc_{1,1}^{\top},\ldots,\bsc_{gw,1}^{\top}) (\kappa_{0,1},\kappa_{1,1},\ldots,\kappa_{gw-1,1})^{\top} \\
& \qquad \oplus \cdots \oplus  (\bsc_{1,\beta}^{\top},\ldots,\bsc_{gw,\beta}^{\top}) (\kappa_{0,\beta},\kappa_{1,\beta},\ldots, \kappa_{gw-1,\beta})^{\top}  \\
& = (\bsc_{1,1}^{\top},\bsc_{1,2}^{\top},\ldots,\bsc_{1,\beta}^{\top},\ldots, \bsc_{gw,1}^{\top},\bsc_{gw,2}^{\top},\ldots,\bsc_{gw,\beta}^{\top}) \\
& \qquad \cdot (\kappa_{0,1},\kappa_{0,2},\ldots,\kappa_{0,\beta},\ldots,\kappa_{gw-1,1},\kappa_{gw-1,2},\ldots,\kappa_{gw-1,\beta})^{\top}  \\
& = D_1^{\top}\tr_{\beta gw}\left( \Ecal_{\beta}(k_1,\ldots,k_{\beta})\right) ,
\end{align*}
where the last equality stems from the digit interlacing composition of $D_1$ and the definition of $\Ecal_{\beta}$.
In case of vectors in $\NN_0^{\beta s}$, we apply $\Ecal_{\beta}$ to every non-overlapping block of consecutive $\beta$ components, i.e., we define
\begin{align*}
\Ecal_{\beta}(k_1,\ldots,k_{\beta s}) := (\Ecal_{\beta}(k_1,\ldots,k_{\beta}),\ldots,\Ecal_{\beta}(k_{\beta(s-1)+1},\ldots,k_{\beta s})) \in \NN_0^{s}.
\end{align*}
Again it is obvious that $\Ecal_{\beta}$ is a bijection between the set $\{0,\ldots,b^m-1\}^{\beta s}$ and the set $\{0,\ldots,b^{\beta m}-1\}^s$ for any $m\in \NN$.
Then using the above result we have
\begin{align*}
& \quad C_1^{\top}\tr_{gw}(k_1)\oplus \cdots \oplus C_{\beta s}^{\top}\tr_{gw}(k_{\beta s}) \\
& = D_1^{\top}\tr_{\beta gw}\left( \Ecal_{\beta}(k_1,\ldots,k_{\beta})\right)\oplus \cdots \oplus D_s^{\top}\tr_{\beta gw}\left( \Ecal_{\beta}(k_{\beta(s-1)+1},\ldots,k_{\beta s})\right) ,
\end{align*}
for any $(k_1,\ldots,k_{\beta s}) \in \{0,\ldots,b^{gw}-1\}^{\beta s}$, from which it follows that
\begin{align*}
P^{\perp} = \left\{ \Ecal_{\beta}(\bsk)\colon \bsk\in Q^{\perp}\right\} =: \Ecal_{\beta}(Q^{\perp}) .
\end{align*}
It is also obvious that for any $\bsk=(k_1,\ldots,k_{\beta s}) \in \{0,\ldots,b^{gw}-1\}^{\beta s}$ we have
\begin{align*}
\varkappa(\bsk) = \varkappa(\Ecal_{\beta}(\bsk)) .
\end{align*}
Therefore it holds that
\begin{align*}
\varkappa(P) & = \min_{\bsk\in P^{\perp}\setminus \{\bszero\}}\varkappa(\bsk) = \min_{\bsk\in \Ecal_{\beta}(Q^{\perp})\setminus \{\bszero\}}\varkappa(\bsk) \\
& = \min_{\bsk\in Q^{\perp}\setminus \{\bszero\}}\varkappa(\Ecal_{\beta}(\bsk)) = \min_{\bsk\in Q^{\perp}\setminus \{\bszero\}}\varkappa(\bsk) = \varkappa(Q) ,
\end{align*}
which proves the first part of this lemma.

Using the fact that $Q$ is an order 1 digital $(0,gw,\beta s)$-net and Lemma~\ref{lem:dick_dual}, we can easily see that $P$ is an order $\beta$ digital $(t,gw,s)$-net with
\begin{align*}
t = \beta \min \left\{gw, \left\lfloor \frac{s(\beta-1)}{2} \right\rfloor\right\} .
\end{align*}
Therefore, $P$ satisfies
\begin{align*}
\mu_{\beta}(P) \geq \max \left\{0, \beta \left( gw - \left\lfloor \frac{s(\beta-1)}{2} \right\rfloor\right)\right\}+1,
\end{align*}
which proves the second part of this lemma.
\end{proof}

\begin{remark}\label{rem:gsy_constr}
Let $\alpha\in \NN$, $\alpha \geq 2$.
As already stated in Theorem~\ref{thm:main}, we need to set $\beta\geq 2\alpha$ and $g\geq 2\alpha s$ in order for $P$ to achieve the best possible rate of convergence in $\Hcal_{\alpha,s}$.
The condition $\beta\geq 2\alpha$ is required to exploit the decay of the Walsh coefficients of $\Kcal_{\alpha,s}$ in a suitable manner as done in \cite{GSYexist}, 
whereas the condition $g\geq 2\alpha s$ is to exploit the sparsity of the Walsh coefficients, which shall be made clear in the next section.
The additional condition $g\geq \lfloor s(\beta -1)/2\rfloor$ in Theorem~\ref{thm:main} is included just for a trivial technical reason such that the $t$-value of $P$ as an order $\beta$ digital net is independent of the choice $w\in \NN$.
\end{remark}

\section{The proof of the main result}\label{sec:proof}
We first provide the proof of Theorem~\ref{thm:main} by using the results which shall be shown later in Subsections~\ref{subsec:sparsity}, \ref{subsec:main_part} and \ref{subsec:disc_part}.
\begin{proof}[Proof of Theorem~\ref{thm:main}]
Here we only consider the case $\alpha \geq 3$, although the case $\alpha=2$ can be shown in a similar way.
For any digital net $P$ over $\FF_b$ with generating matrices of the size $n\times m$, we have
\begin{align}\label{eq:main_discre}
& \left(e^{\wor}(\Hcal_{\alpha,s};P)\right)^2 \nonumber \\
& \quad = \sum_{\bsk,\bsl\in P_{\infty}^{\perp}\setminus \{\bszero\}}\hat{\Kcal}_{\alpha,s}(\bsk,\bsl) \leq \sum_{\bsk,\bsl\in P_{\infty}^{\perp}\setminus \{\bszero\}}\left|\hat{\Kcal}_{\alpha,s}(\bsk,\bsl)\right| \nonumber \\
& \quad \leq \sum_{\bsk,\bsl\in P^{\perp}\setminus \{\bszero\}}\left|\hat{\Kcal}_{\alpha,s}(\bsk,\bsl)\right| + \sum_{\substack{\bsk\in P_{\infty}^{\perp}\setminus P^{\perp} \\ \bsl\in P_{\infty}^{\perp}\setminus \{\bszero\}}}\left|\hat{\Kcal}_{\alpha,s}(\bsk,\bsl)\right| + \sum_{\substack{\bsk\in P_{\infty}^{\perp}\setminus \{\bszero\} \\ \bsl\in P_{\infty}^{\perp}\setminus P^{\perp}}}\left|\hat{\Kcal}_{\alpha,s}(\bsk,\bsl)\right| \nonumber \\
& \quad = \sum_{\bsk,\bsl\in P^{\perp}\setminus \{\bszero\}}\left|\hat{\Kcal}_{\alpha,s}(\bsk,\bsl)\right| + 2\sum_{\substack{\bsk\in P_{\infty}^{\perp}\setminus P^{\perp} \\ \bsl\in P_{\infty}^{\perp}\setminus \{\bszero\}}}\left|\hat{\Kcal}_{\alpha,s}(\bsk,\bsl)\right| \nonumber \\
& \quad \leq \sum_{\bsk,\bsl\in P^{\perp}\setminus \{\bszero\}}\left|\hat{\Kcal}_{\alpha,s}(\bsk,\bsl)\right| + 2\sum_{\substack{\bsk\in \NN_0^s \setminus \{0,1,\ldots,b^n-1\}^s \\ \bsl\in P_{\infty}^{\perp}\setminus \{\bszero\}}}\left|\hat{\Kcal}_{\alpha,s}(\bsk,\bsl)\right|
\end{align}
where the first equality is given in \cite[Proof of Theorem~15]{BD09}, the second equality stems from the property $\overline{\hat{\Kcal}_{\alpha,s}(\bsk,\bsl)}=\hat{\Kcal}_{\alpha,s}(\bsl,\bsk)$ for any $\bsk,\bsl\in \NN_0^s$, and both the second and last inequalities follow immediately from the definitions of dual nets $P^{\perp}$ and $P^{\perp}_{\infty}$.
In what follows, the first and second terms of (\ref{eq:main_discre}) are called the \emph{main part} and \emph{discretization part} of the squared worst-case error, respectively.

Now let $\beta,g\in \NN$ be given such that $\beta \geq 2\alpha$, $g\geq 2\alpha s$ and $g\geq \lfloor s(\beta -1)/2\rfloor$. Moreover let $b\geq \beta gs$ be a prime.
For $w\in \NN$, let $P$ be constructed as in Subsection~\ref{subsec:construction} and let $N = |P| = b^{gw}$.
By using the upper bounds on the main part and the discretization part for $P$ shown in Propositions~\ref{prop:main_part} and \ref{prop:discre_part}, respectively, we see that the discretization part does not affect the order of convergence appearing in the main part.
That is, we have
\begin{align*}
\left(e^{\wor}(\Hcal_{\alpha,s};P)\right)^2 & \leq A^{(1)}_{\alpha,\beta,b,s}\frac{(\log N)^{s-1}}{N^{2\alpha}} + O\left(\frac{(\log N)^{s\alpha}}{N^{3\alpha}}\right) .
\end{align*}
Thus there exists a positive constant $C_{\alpha,\beta,b,s}$ such that Theorem~\ref{thm:main} holds.
\end{proof}

\subsection{Sparsity of the Walsh coefficients}\label{subsec:sparsity}
Let us consider the one-dimensional case first.
In the following, we write
\begin{align*}
\hat{b}_{r}(k) = \int_0^1 \frac{B_{r}(x)}{r !}\overline{\wal_k(x)}\rd x ,
\end{align*}
and 
\begin{align*}
\hat{b}_{r,\per}(k,l) = \int_0^1\int_0^1 \frac{\tilde{B}_{r}(x-y)}{r !}\overline{\wal_k(x)}\wal_l(y)\rd x \rd y,
\end{align*}
for $r\in \NN_0$ and $k,l\in \NN_0$, where $\tilde{B}_{r}:\RR\to \RR$ is defined by extending $B_{r}$ periodically to $\RR$.
Note that we have $B_{r}(|x-y|)=\tilde{B}_{r}(x-y)$ for even $r$ and $B_{r}(|x-y|)=(-1)^{1_{x<y}}\tilde{B}_{r}(x-y)$ for odd $r$ for any $x,y\in [0,1)$, where $1_{x<y}$ equals 1 if $x<y$ and 0 otherwise.
Then it is obvious that
\begin{align}\label{eq:sparse_walsh}
\hat{\Kcal}_{\alpha}(k,l) = \sum_{\tau=0}^{\alpha}\hat{b}_{\tau}(k)\overline{\hat{b}_{\tau}(l)}+(-1)^{\alpha+1}\hat{b}_{2\alpha,\per}(k,l) .
\end{align}
In the following proposition, we show that $\hat{\Kcal}_{\alpha}(k,l)=0$ for many choices of $k,l\in \NN_0$, which means that the Walsh coefficients $\hat{\Kcal}_{\alpha}$ are actually sparse.
\begin{proposition}\label{prop:sparse_walsh}
For $\alpha\in \NN$, $\alpha\geq 2$, let $k,l\in \NN_0$ with $\varkappa(k\ominus l)> 2\alpha $. Then we have $\hat{\Kcal}_{\alpha}(k,l)=0$.
\end{proposition}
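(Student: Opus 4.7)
The plan is to use the decomposition \eqref{eq:sparse_walsh},
$$\hat{\Kcal}_\alpha(k,l) = \sum_{\tau=0}^\alpha \hat{b}_\tau(k)\overline{\hat{b}_\tau(l)} + (-1)^{\alpha+1}\hat{b}_{2\alpha,\per}(k,l),$$
and to show both summands vanish separately under the hypothesis $\varkappa(k \ominus l) > 2\alpha$.

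For the first summand, the key input is the classical sparsity $\hat{b}_\tau(k) = 0$ whenever $\varkappa(k) > \tau$, which holds because $B_\tau/\tau!$ is a polynomial of degree $\tau$ and every polynomial of degree at most $\tau$ lies in the linear span of $\{\wal_j : \varkappa(j) \le \tau\}$; this is recalled for instance in \cite{Dick09,SYwalsh}. Combined with the triangle inequality $\varkappa(k \ominus l) \le \varkappa(k) + \varkappa(l)$, which holds since any nonzero digit of $k \ominus l$ forces at least one of the corresponding digits of $k$ or $l$ to be nonzero, the hypothesis yields $\max\{\varkappa(k),\varkappa(l)\} > \alpha$. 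Without loss of generality $\varkappa(k) > \alpha \ge \tau$ for every $\tau \in \{0,\ldots,\alpha\}$, so $\hat{b}_\tau(k) = 0$ and the first summand vanishes term-by-term.

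For the second summand I would prove the stronger statement $\hat{b}_{2\alpha,\per}(k,l) = 0$ whenever $\varkappa(k) + \varkappa(l) > 2\alpha$, which covers our hypothesis via the same triangle inequality. The strategy is to combine the convolution representation
$$(-1)^{\alpha+1}\frac{\tilde{B}_{2\alpha}(x-y)}{(2\alpha)!} = \int_0^1 \frac{\tilde{B}_\alpha(x-t)\,\tilde{B}_\alpha(y-t)}{(\alpha!)^2}\,\rd t,$$
verified by matching Fourier series on both sides, with the Peano-type decomposition $\tilde{B}_\alpha(x-t)/\alpha! = B_\alpha(x-t)/\alpha! + (-1)^{\alpha-1}(t-x)_+^{\alpha-1}/(\alpha-1)!$ coming from $B_\alpha(u+1) = B_\alpha(u) + \alpha u^{\alpha-1}$. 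Substituting and expanding the product produces four bilinear contributions; three of them involve at least one polynomial factor $B_\alpha(x-t)/\alpha!$ and, after the $t$-integration, reduce to honest bivariate polynomials in $(x,y)$ of total degree at most $2\alpha$. Their Walsh coefficients vanish when $\varkappa(k)+\varkappa(l) > 2\alpha$ by the same polynomial-sparsity argument, hence under our hypothesis.

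The expected main obstacle is the remaining fourth contribution, $\int_0^1 (t-x)_+^{\alpha-1}(t-y)_+^{\alpha-1}/((\alpha-1)!)^2\,\rd t$, which is only piecewise polynomial on $[0,1)^2$, with two different polynomial pieces of degree $2\alpha-1$ on the triangles $\{x \le y\}$ and $\{x \ge y\}$. The plan for this piece is to extend each triangular polynomial piece to the full square (its total degree is at most $2\alpha-1 < 2\alpha$, so its full-square Walsh coefficient vanishes under our hypothesis), leaving two triangular integrals of explicit polynomials that can be recombined using the $(x,y)$-symmetry of this fourth kernel and the diagonal continuity of the two polynomial pieces. After this bookkeeping — which parallels the Walsh-coefficient computations carried out in \cite{SYwalsh} — what remains is a single full-square polynomial Walsh coefficient of total degree at most $2\alpha$, vanishing once more by the polynomial-sparsity argument.
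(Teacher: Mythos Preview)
Your treatment of the first summand $\sum_{\tau=0}^{\alpha}\hat{b}_{\tau}(k)\overline{\hat{b}_{\tau}(l)}$ is correct and coincides with the paper's Lemma~\ref{lem:sparse_walsh_1}.

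The gap is in the second summand. You announce the ``stronger statement $\hat{b}_{2\alpha,\per}(k,l)=0$ whenever $\varkappa(k)+\varkappa(l)>2\alpha$,'' but this statement is false. Take any $k\in\NN$ with $\varkappa(k)>\alpha$ and set $l=k$; then $\varkappa(k)+\varkappa(l)=2\varkappa(k)>2\alpha$, yet $\hat{b}_{2\alpha,\per}(k,k)\neq 0$. Indeed, expanding $\tilde{B}_{2\alpha}(x-y)/(2\alpha)!$ in its trigonometric Fourier series gives
\[
(-1)^{\alpha+1}\hat{b}_{2\alpha,\per}(k,k)=\sum_{h\neq 0}\frac{|\hat{e}_h(k)|^2}{(2\pi h)^{2\alpha}},
\qquad \hat{e}_h(k)=\int_0^1 e^{2\pi i h x}\,\overline{\wal_k(x)}\,\rd x,
\]
and by Parseval $\sum_{h\neq 0}|\hat{e}_h(k)|^2=1$ for every $k\geq 1$, so the sum is strictly positive. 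Equivalently, $\hat{\Kcal}_\alpha(k,k)>0$ for all $k$ by positive definiteness of $\Kcal_\alpha$, while all the terms $\hat b_\tau(k)\overline{\hat b_\tau(k)}$ vanish once $\varkappa(k)>\alpha$, forcing $(-1)^{\alpha+1}\hat b_{2\alpha,\per}(k,k)=\hat{\Kcal}_\alpha(k,k)>0$.

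This pinpoints where your outline breaks down: your terms~1--3 really are polynomials of total degree at most $2\alpha$ and their Walsh coefficients do vanish when $\varkappa(k)+\varkappa(l)>2\alpha$. Hence all of the nonzero value of $(-1)^{\alpha+1}\hat b_{2\alpha,\per}(k,k)$ must sit in your fourth contribution $\int_0^1 (t-x)_+^{\alpha-1}(t-y)_+^{\alpha-1}\,\rd t/((\alpha-1)!)^2$. Consequently no amount of ``bookkeeping'' can reduce that piece to a full-square polynomial of degree at most $2\alpha$; after you subtract the polynomial extension $P_1$, the remainder $(P_2-P_1)\mathbf{1}_{x>y}$ is a genuinely piecewise-polynomial function whose Walsh coefficients survive on the diagonal $k=l$ for arbitrarily large $\varkappa(k)$.

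The condition that actually makes $\hat b_{2\alpha,\per}(k,l)$ vanish is $\varkappa(k\ominus l)>2\alpha$, not $\varkappa(k)+\varkappa(l)>2\alpha$; the diagonal case has $\varkappa(k\ominus l)=0$, so there is no contradiction with the proposition. The paper establishes the correct statement (its Lemma~\ref{lem:sparse_walsh_2}: $\hat b_{r,\per}(k,l)=0$ whenever $\varkappa(k\ominus l)>r$) by induction on $r$, feeding the hypothesis through the recursion for $\hat b_{r,\per}$ derived in \cite[Equation~14.18]{DPbook}; the subadditivity $\varkappa(k'\ominus l)\ge \varkappa(k\ominus l)-1$ is exactly what propagates the $\ominus$-condition through the induction. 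Your polynomial-sparsity route does not see the digitwise difference $k\ominus l$ at all, which is why it cannot distinguish the diagonal from the genuinely off-diagonal cases.
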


In order to prove Proposition~\ref{prop:sparse_walsh}, it suffices to show that every term on the right-hand side of (\ref{eq:sparse_walsh}) is 0 whenever $\varkappa(k\ominus l)> 2\alpha $, which shall be proven in Lemmas~\ref{lem:sparse_walsh_1} and \ref{lem:sparse_walsh_2} below.
Throughout this subsection, we denote the $b$-adic expansions of $k,l\in \NN$ by $k=\kappa_1 b^{a_1-1}+\cdots+\kappa_v b^{a_v-1}$ and $l=\lambda_1 b^{d_1-1}+\cdots+\lambda_w b^{d_w-1}$, respectively, where $\kappa_1,\ldots,\kappa_v,\lambda_1,\ldots,\lambda_w\in \{1,\ldots,b-1\}$, $a_1>\cdots>a_v>0$ and $d_1>\cdots>d_w>0$.
When $k=0$ ($l=0$, resp.), we assume that $v=0$ and $a_0=0$ ($w=0$ and $d_0=0$, resp.).
Note that we have $\varkappa(k)=v$ and $\varkappa(l)=w$.

\begin{lemma}\label{lem:sparse_walsh_1}
For $\alpha\in \NN$, $\alpha\geq 2$, let $k,l\in \NN_0$ with $\varkappa(k\ominus l)> 2\alpha $. Then we have $\hat{b}_{\tau}(k)\overline{\hat{b}_{\tau}(l)}=0$ for any $0\leq \tau\leq \alpha$.
\end{lemma}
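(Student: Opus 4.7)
The plan is to reduce Lemma~\ref{lem:sparse_walsh_1} to a sparsity property of the Walsh coefficients of Bernoulli polynomials that is established in the references cited earlier in the paper (SYwalsh, BD09, Dick09): for every $r\in \NN_0$ and every $k\in \NN_0$,
\[
\varkappa(k) > r \quad \Longrightarrow \quad \hat{b}_r(k) = 0.
\]
This property is proved by expanding $B_r/r!$ into Walsh functions via the explicit formula in \cite{SYwalsh}, which shows that $\hat{b}_r(k)$ factors into a product indexed by the non-zero digits of $k$, a product whose number of factors forces it to vanish once $\varkappa(k)$ exceeds~$r$. I would invoke this as a black box rather than reproving it.

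Granted the sparsity property, the argument is short. First I would record the triangle inequality
\[
\varkappa(k\ominus l) \leq \varkappa(k) + \varkappa(l),
\]
which holds because the $i$-th digit of $k\ominus l$ is non-zero only when $\kappa_i \neq \lambda_i$, in which case at least one of $k,l$ carries a non-zero digit at position $i$. Combining this with the hypothesis $\varkappa(k\ominus l) > 2\alpha$ yields $\varkappa(k)+\varkappa(l) > 2\alpha$, and therefore $\max\{\varkappa(k),\varkappa(l)\} > \alpha$.

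Now I would fix an arbitrary $0\leq \tau \leq \alpha$. Since $\tau \leq \alpha < \max\{\varkappa(k),\varkappa(l)\}$, at least one of $\varkappa(k) > \tau$ or $\varkappa(l) > \tau$ holds. In either case the sparsity property forces the corresponding factor $\hat{b}_\tau(k)$ or $\hat{b}_\tau(l)$ to be zero, so $\hat{b}_\tau(k)\overline{\hat{b}_\tau(l)} = 0$, which is what was claimed.

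The only non-routine ingredient is the sparsity statement $\hat{b}_r(k)=0$ whenever $\varkappa(k)>r$, and this is precisely where the main obstacle would lie if it were not already available in the literature; everything else is a one-line triangle-inequality argument followed by a case split.
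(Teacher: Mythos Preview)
Your proof is correct and essentially identical to the paper's own argument: the paper also applies the triangle inequality $\varkappa(k\ominus l)\le \varkappa(k)+\varkappa(l)$ to force $\varkappa(k)>\alpha$ or $\varkappa(l)>\alpha$, and then invokes the vanishing of $\hat b_\tau$ for all $0\le\tau\le\alpha$ from \cite[Section~4]{Dick09}. The only cosmetic difference is that the paper identifies one fixed culprit ($k$ or $l$) that kills every $\hat b_\tau$ at once, whereas you fix $\tau$ first and then pick the culprit; both are the same argument.
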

\begin{proof}
Since $\varkappa(k\ominus l)\leq \varkappa(k)+\varkappa(l)=v+w$, we must have either $v> \alpha$ or $w> \alpha$ whenever $\varkappa(k\ominus l)> 2\alpha $. Then it follows from \cite[Section~4]{Dick09} that either $\hat{b}_{\tau}(k)=0$ or $\hat{b}_{\tau}(l)=0$ for all $0\leq \tau\leq \alpha$, which completes the proof.
\end{proof}

\begin{lemma}\label{lem:sparse_walsh_2}
For $r\in \NN$, $r\geq 2$, let $k,l\in \NN_0$ with $\varkappa(k\ominus l)> r $. Then we have $\hat{b}_{r,\per}(k,l)=0$.
\end{lemma}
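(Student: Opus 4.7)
The plan is to split the periodic Bernoulli polynomial into its polynomial part and a jump correction. For $x,y\in[0,1)$ and $r\geq 2$ the identity
\[
\tilde{B}_{r}(x-y) = B_{r}(x-y) + r(x-y)^{r-1}\mathbf{1}_{x<y}
\]
(valid because $B_r(z+1)-B_r(z)=rz^{r-1}$) gives $\hat{b}_{r,\per}(k,l) = A + B$, where $A$ is the integral of $B_{r}(x-y)/r!$ against $\overline{\wal_{k}(x)}\wal_{l}(y)$ and $B$ is the corresponding integral of $(x-y)^{r-1}\mathbf{1}_{x<y}/(r-1)!$. I would handle the two pieces separately.

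For $A$, the Bernoulli addition formula $B_{r}(x-y)=\sum_{i=0}^{r}\binom{r}{i}(-1)^{i}B_{r-i}(x)y^{i}$ separates the variables, so
\[
A = \sum_{i=0}^{r}(-1)^{i}\,\hat{b}_{r-i}(k)\,\overline{\hat{m}_{i}(l)},
\]
with $\hat{m}_{i}(l):=\int_{0}^{1}(y^{i}/i!)\overline{\wal_{l}(y)}\,dy$. The Bernoulli sparsity $\hat{b}_{\tau}(n)=0$ whenever $\varkappa(n)>\tau$ (the content of \cite[Section~4]{Dick09} used already in Lemma~\ref{lem:sparse_walsh_1}) kills the Bernoulli factor as soon as $\varkappa(k)>r-i$, and since $y^{i}/i!$ lies in $\mathrm{span}\{B_{0},\dots,B_{i}\}$ the same sparsity transfers to $\hat{m}_{i}(l)$, killing the monomial factor whenever $\varkappa(l)>i$. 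A surviving summand therefore requires $\varkappa(k)+\varkappa(l)\leq r$, but the triangle inequality $\varkappa(k\ominus l)\leq\varkappa(k)+\varkappa(l)$ combined with the hypothesis $\varkappa(k\ominus l)>r$ makes every summand vanish. Hence $A=0$.

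The main obstacle is $B$, since the indicator $\mathbf{1}_{x<y}$ does not separate the two variables and the clean factorization used for $A$ is unavailable. My plan is to pair $B$ with its mirror $B':=\int_{0}^{1}\int_{0}^{1}\frac{(x-y)^{r-1}}{(r-1)!}\mathbf{1}_{x>y}\,\overline{\wal_{k}(x)}\wal_{l}(y)\,dx\,dy$, so that $B+B'$ is an indicator-free polynomial integral of degree $r-1$. Expanding $(x-y)^{r-1}$ by the binomial theorem and applying the same separation-plus-monomial-sparsity argument as for $A$ (now at degree $r-1$) forces $B+B'=0$ whenever $\varkappa(k\ominus l)>r-1$, in particular under our stronger hypothesis. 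This reduces matters to deducing $B=0$ from $B=-B'$, equivalently to the vanishing of the sgn-weighted integral $B-B'=-\int_{0}^{1}\int_{0}^{1}\frac{|x-y|^{r-1}\mathrm{sgn}(x-y)^{r}}{(r-1)!}\,\overline{\wal_{k}(x)}\wal_{l}(y)\,dx\,dy$. I expect this last step to proceed either by an $x\leftrightarrow y$ swap exploiting the symmetry $\tilde{B}_{r}(y-x)=(-1)^{r}\tilde{B}_{r}(x-y)$ together with the conjugation relations among Walsh functions, or by invoking the explicit Walsh-coefficient formulas for anchored-box indicators worked out in \cite{SYwalsh,Ywalsh}; this is the technical heart of the argument, and precisely the same type of sparsity that underlies Chen--Skriganov's classical discrepancy analysis.
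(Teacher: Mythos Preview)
Your treatment of the polynomial piece $A$ is correct and clean: separating variables via the Bernoulli addition formula and invoking the Walsh--coefficient sparsity of $B_\tau$ (hence of $y^i/i!$) dispatches every summand once $\varkappa(k)+\varkappa(l)>r$. The reduction $B+B'=0$ is likewise fine, for the same reason at degree $r-1$. The gap is the final step.

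Neither of your two suggested routes closes it. Carrying out the $x\leftrightarrow y$ swap on $B'$ and rewriting $\wal_l(x)=\overline{\wal_{0\ominus l}(x)}$, $\overline{\wal_k(y)}=\wal_{0\ominus k}(y)$ gives
\[
B'(k,l)\;=\;(-1)^{r-1}\,B(0\ominus l,\,0\ominus k),
\]
so that together with $B=-B'$ you obtain $B(k,l)=(-1)^{r}B(0\ominus l,0\ominus k)$; applying the same relation once more returns $B(k,l)=B(k,l)$, a tautology rather than a vanishing statement. The parity relation $\tilde{B}_{r}(y-x)=(-1)^{r}\tilde{B}_{r}(x-y)$ is already built into this computation and contributes nothing further. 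As for the anchored--box references, those works supply Walsh expansions of $\mathbf{1}_{[0,y)}(x)$ and of $B_\tau$ \emph{separately}; they do not give off-the-shelf sparsity in $k\ominus l$ for the mixed kernel $(x-y)^{r-1}\mathbf{1}_{x<y}$, and extracting it from them amounts to reproving the lemma. Indeed, under your hypothesis you already have $A=0$ and $B+B'=0$, whence $B-B'=2B=2\hat{b}_{r,\per}(k,l)$: the quantity you still need to kill is literally twice the one you started with.

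The paper avoids this circularity by induction on $r$. The base case $r=2$ is read off from Dick's explicit list of the nonzero $\hat{b}_{2,\per}(k,l)$. For the step one uses the Walsh ``integration by parts'' recursion \cite[Equation~(14.18)]{DPbook}, which expresses $\hat{b}_{r,\per}(k,l)$ as a convergent linear combination of values $\hat{b}_{r-1,\per}(k',l)$, $\hat{b}_{r-1,\per}(k,l)$ and $\hat{b}_{r-1,\per}(\theta b^{c+a_1-1}+k,l)$; in each of these the first argument differs from $k$ in at most one $b$-adic digit, so $\varkappa(\,\cdot\ominus l)\ge \varkappa(k\ominus l)-1>r-1$ and the inductive hypothesis applies termwise.
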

\begin{proof}
We prove this lemma by induction on $r\geq 2$. For $k,l\in \NN_0$, we shall write $k'=k-\kappa_1 b^{a_1-1}$, $k''=k'-\kappa_2 b^{a_2-1}$, $l'=l-\lambda_1 b^{d_1-1}$ and $l''=l'-\lambda_2 b^{d_2-1}$.

Let us consider the case $r=2$ first.
For $k,l\in \NN_0$ with $\varkappa(k\ominus l)> 2$, it never follows that $k=l$, $k'=l'$ with $k\neq l$, $k'=l$, $k=l'$, $k''=l$, or $k=l''$, since we have $\varkappa(k\ominus l)=$ 0, 2, 1, 1, 2 and 2, respectively.
Then the result immediately follows from \cite[Lemma~10]{Dick09}.

Now assume that the result holds true for $r-1$. That is, we assume that 
\begin{align}\label{eq:sparse_walsh_proof_1}
\hat{b}_{r-1,\per}(k,l)=0 \quad \text{for $k,l\in \NN_0$ with $\varkappa(k\ominus l)> r-1$}.
\end{align}
If either $k=0$ or $l=0$ holds, the result $\hat{b}_{r,\per}(k,l)=0$ immediately follows from \cite[Lemma~11]{Dick09}. Thus we focus on the case $k,l>0$ in the following. As in \cite[Equation~14.18]{DPbook}, for any $k,l\in \NN$ and $r>2$ we have the identity
\begin{align*}
\hat{b}_{r,\per}(k,l) & = -\frac{1}{b^{a_1}}\Big(\frac{1}{1-\omega_b^{-\kappa_1}}\hat{b}_{r-1,\per}(k',l)+\left( \frac{1}{2}+\frac{1}{\omega_b^{-\kappa_1}-1}\right)\hat{b}_{r-1,\per}(k,l) \\
& \qquad \qquad + \sum_{c=1}^{\infty}\sum_{\theta=1}^{b-1}\frac{1}{b^c(\omega_b^{\theta}-1)}\hat{b}_{r-1,\per}(\theta b^{c+a_1-1}+ k,l) \Big) .
\end{align*}
Thus, in order to prove $\hat{b}_{r,\per}(k,l)=0$ for $k,l\in \NN$ with $\varkappa(k\ominus l)> r $, it suffices to prove that (i) $\hat{b}_{r-1,\per}(k',l)=0$, (ii) $\hat{b}_{r-1,\per}(k,l)=0$ and (iii) $\hat{b}_{r-1,\per}(\theta b^{c+a_1-1}+ k,l)=0$ for any $c\in \NN$ and $1\leq \theta\leq b-1$ whenever $\varkappa(k\ominus l)> r $.
Here, from the assumption (\ref{eq:sparse_walsh_proof_1}), it is trivial that $\hat{b}_{r-1,\per}(k,l)=0$ also for $k,l\in \NN$ with $\varkappa(k\ominus l)> r$. The remaining two items (i) and (iii) can be proven in the following way.

Since we have
\begin{align*}
\varkappa(k\ominus l) = \varkappa(k'\ominus l\oplus \kappa_1b^{a_1-1}) & \leq \varkappa(k'\ominus l)+\varkappa(\kappa_1b^{a_1-1}) \\
& = \varkappa(k'\ominus l)+1,
\end{align*}
it holds that $\varkappa(k'\ominus l)\geq \varkappa(k\ominus l)-1>r-1$. Thus, again from the assumption (\ref{eq:sparse_walsh_proof_1}), it follows that $\hat{b}_{r-1,\per}(k',l)=0$, which completes the proof of the item (i).

Similarly, since we have
\begin{align*}
\varkappa(k\ominus l) & = \varkappa(k\ominus l\oplus \theta b^{c+a_1-1}\ominus \theta b^{c+a_1-1}) \\
& \leq \varkappa(k\ominus l\oplus \theta b^{c+a_1-1}) + \varkappa(\theta b^{c+a_1-1}) \\
& = \varkappa((\theta b^{c+a_1-1}+k)\ominus l) + 1,
\end{align*}
for any $c\in \NN$ and $1\leq \theta\leq b-1$, it holds that $\varkappa((\theta b^{c+a_1-1}+k)\ominus l)\geq \varkappa(k\ominus l)-1>r-1$. Again from the assumption (\ref{eq:sparse_walsh_proof_1}), it follows that $\hat{b}_{r-1,\per}(\theta b^{c+a_1-1}+ k,l)=0$, which completes the proof of the item (iii).
\end{proof}

Let us move on to the high-dimensional case. As a corollary of Proposition~\ref{prop:sparse_walsh} we have the following, which shows the sparsity of the Walsh coefficients $\hat{\Kcal}_{\alpha,s}$.
\begin{corollary}\label{cor:sparse_walsh}
For $s,\alpha\in \NN$, $\alpha\geq 2$, let $\bsk,\bsl\in \NN_0^s$ with $\varkappa(\bsk\ominus \bsl)> 2\alpha s$. Then we have $\hat{\Kcal}_{\alpha,s}(\bsk,\bsl)=0$.
\end{corollary}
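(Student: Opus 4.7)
The plan is to reduce the corollary to the one-dimensional statement Proposition~\ref{prop:sparse_walsh} via the tensor-product structure of the reproducing kernel. Since $\Kcal_{\alpha,s}(\bsx,\bsy)=\prod_{j=1}^{s}\Kcal_{\alpha}(x_j,y_j)$ and $\wal_{\bsk}(\bsx)=\prod_{j=1}^{s}\wal_{k_j}(x_j)$, Fubini's theorem yields the factorisation
\begin{align*}
\hat{\Kcal}_{\alpha,s}(\bsk,\bsl) = \prod_{j=1}^{s}\hat{\Kcal}_{\alpha}(k_j,l_j).
\end{align*}
So the first step I would take is to record this factorisation explicitly; it is immediate but it is the only place where the multivariate structure enters.

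Next, I would exploit the hypothesis componentwise. By the definition of $\varkappa$ on vectors, $\varkappa(\bsk\ominus\bsl)=\sum_{j=1}^{s}\varkappa(k_j\ominus l_j)$. Therefore the assumption $\varkappa(\bsk\ominus\bsl)>2\alpha s$ forces at least one index $j^{*}\in\{1,\dots,s\}$ with $\varkappa(k_{j^{*}}\ominus l_{j^{*}})>2\alpha$, for otherwise every summand would be bounded by $2\alpha$ and the total would be at most $2\alpha s$.

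Finally, applying Proposition~\ref{prop:sparse_walsh} at the coordinate $j^{*}$ gives $\hat{\Kcal}_{\alpha}(k_{j^{*}},l_{j^{*}})=0$, and the factorisation from the first step then yields $\hat{\Kcal}_{\alpha,s}(\bsk,\bsl)=0$, as required.

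There is no real obstacle here: the corollary is a straightforward pigeonhole-plus-tensorisation consequence of the one-dimensional sparsity result. The only point to be slightly careful about is justifying the pointwise/integrable exchange giving the factorisation of $\hat{\Kcal}_{\alpha,s}$, but this is standard since $\Kcal_{\alpha}\in L^{2}([0,1)^{2})$ and the Walsh functions are bounded, so Fubini applies without fuss.
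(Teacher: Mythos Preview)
Your proof is correct and follows essentially the same approach as the paper: factorise $\hat{\Kcal}_{\alpha,s}$ via the tensor-product structure, apply the pigeonhole principle to find a coordinate $j^{*}$ with $\varkappa(k_{j^{*}}\ominus l_{j^{*}})>2\alpha$, and then invoke Proposition~\ref{prop:sparse_walsh}. The paper's argument is identical in substance, differing only in that it does not explicitly mention Fubini.
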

\begin{proof}
From the definitions of $\Kcal_{\alpha,s}$ and Walsh functions, we have
\begin{align}\label{eq:walsh_coeff_projection}
\hat{\Kcal}_{\alpha,s}(\bsk,\bsl) & = \int_{[0,1)^{2s}}\prod_{j=1}^{s}\Kcal_{\alpha}(x_j,y_j)\overline{\wal_{k_j}(x_j)}\wal_{l_j}(y_j) \rd \bsx \rd \bsy \nonumber \\
& = \prod_{j=1}^{s}\int_{[0,1)^2}\Kcal_{\alpha}(x_j,y_j)\overline{\wal_{k_j}(x_j)}\wal_{l_j}(y_j) \rd x_j \rd y_j \nonumber \\
& = \prod_{j=1}^{s}\hat{\Kcal}_{\alpha}(k_j,l_j) ,
\end{align}
for any $\bsk,\bsl\in \NN_0^s$.
From the definition of $\varkappa$, it follows that $\varkappa(\bsk \ominus \bsl)=\sum_{j=1}^{s}\varkappa(k_j\ominus l_j)$.
From the assumption $\varkappa(\bsk\ominus \bsl)> 2\alpha s$ and the pigeonhole principle, it follows that there exists at least one index $j\in \{1,\ldots,s\}$ such that $\varkappa(k_j\ominus l_j)>2\alpha$.
For such a $j$, it follows from Proposition~\ref{prop:sparse_walsh} that $\hat{\Kcal}_{\alpha}(k_j,l_j)=0$, which completes the proof.
\end{proof}

\subsection{An upper bound on the main part}\label{subsec:main_part}
We recall that we have assumed that $P$ is a digital net constructed as in Subsection~\ref{subsec:construction} with $\beta\geq 2\alpha$ and $g\geq 2\alpha s$, and that
\begin{align*}
\varkappa(P) = \min_{\substack{\bsk,\bsl\in P^{\perp}\\ \bsk\neq \bsl}}\varkappa(\bsk\ominus \bsl).
\end{align*}
Since our explicit construction of $P$ gives $\varkappa(P)\geq g+1\geq 2\alpha s+1$ (see Lemma~\ref{lem:gsy_constr} and Remark~\ref{rem:gsy_constr}), it follows that $\varkappa(\bsk\ominus \bsl)>2\alpha s$ for any $\bsk,\bsl\in P^{\perp}$ with $\bsk\neq \bsl$.
Using this fact and Corollary~\ref{cor:sparse_walsh}, we have
\begin{align*}
\sum_{\bsk,\bsl\in P^{\perp}\setminus \{\bszero\}}\left|\hat{\Kcal}_{\alpha,s}(\bsk,\bsl)\right| & = \sum_{\bsk\in P^{\perp}\setminus \{\bszero\}}\left|\hat{\Kcal}_{\alpha,s}(\bsk,\bsk)\right| + \sum_{\substack{\bsk,\bsl\in P^{\perp}\setminus \{\bszero\}\\ \bsk\ne \bsl}}\left|\hat{\Kcal}_{\alpha,s}(\bsk,\bsl)\right|  \\
& = \sum_{\bsk\in P^{\perp}\setminus \{\bszero\}}\left|\hat{\Kcal}_{\alpha,s}(\bsk,\bsk)\right|.
\end{align*}
From Equation~(\ref{eq:walsh_coeff_projection}) and \cite[Proposition~20]{BD09}, each summand in the last expression can be bounded by
\begin{align*}
\left|\hat{\Kcal}_{\alpha,s}(\bsk,\bsk)\right| & = \prod_{j=1}^{s}\left|\hat{\Kcal}_{\alpha}(k_j,k_j)\right| \leq \prod_{j\colon k_j\neq 0}D_{\alpha,b}b^{-2\mu_{\alpha}(k_j)} \leq \max\{1,D_{\alpha,b}^{s}\}b^{-2\mu_{\alpha}(\bsk)} ,
\end{align*}
where $D_{\alpha,b}$ is positive and depends only on $\alpha$ and $b$.
Thus we have
\begin{align*}
\sum_{\bsk,\bsl\in P^{\perp}\setminus \{\bszero\}}\left|\hat{\Kcal}_{\alpha,s}(\bsk,\bsl)\right| & \leq \max\{1,D_{\alpha,b}^{s}\} \sum_{\bsk\in P^{\perp}\setminus \{\bszero\}}b^{-2\mu_{\alpha}(\bsk)}.
\end{align*}
In what follows, we shall show an upper bound on the last sum by following an argument similar to that used in \cite{GSYexist}.

By Lemma~\ref{lem:gsy_constr}, the digital net $P$ constructed as in Subsection~\ref{subsec:construction} is an order $\beta$ digital $(t,gw,s)$-net over $\FF_b$ with
\begin{align*}
t = \beta \min \left\{gw, \left\lfloor \frac{s(\beta-1)}{2} \right\rfloor\right\} = \beta \left\lfloor \frac{s(\beta-1)}{2} \right\rfloor,
\end{align*}
for any $w\in \NN$.
It is clear that the $t$-value of $P$ is independent of $w$.
It is also known from Lemma~\ref{lem:propagation} that $P$ is an order $1$ digital $(t',gw,s)$-net over $\FF_b$ with
\begin{align*}
t' = \lceil t/\beta \rceil = \left\lfloor \frac{s(\beta-1)}{2} \right\rfloor,
\end{align*}
which is again independent of $w$.
Therefore, $P$ satisfies
\begin{align*}
\mu_{\beta}(P) \geq \beta gw - t+1 \quad \text{and} \quad \mu_1(P) \geq gw - t'+1 .
\end{align*}
Moreover, in \cite[Lemma~3]{GSYexist}, the authors of this paper introduced an interpolation property of the Dick metric functions, that is, for any $1<\alpha\leq \beta$ and $\bsk\in \NN_0^s$, we have
\begin{align*}
\mu_{\alpha}(\bsk) \geq A_{\alpha \beta}\mu_{\beta}(\bsk)+B_{\alpha \beta}\mu_{1}(\bsk),
\end{align*}
with 
\begin{align*}
A_{\alpha \beta} = \frac{\alpha -1}{\beta -1} \quad \text{and} \quad B_{\alpha \beta} = \frac{\beta -\alpha}{\beta -1}.
\end{align*}
Here we note that $B_{\alpha \beta}>1/2$ since we impose the condition $\beta \geq 2\alpha$.

Using these facts, we have
\begin{align}\label{eq:main_part_proof}
 \sum_{\bsk\in P^{\perp}\setminus \{\bszero\}}b^{-2\mu_{\alpha}(\bsk)} & \leq \sum_{\bsk\in P^{\perp}\setminus \{\bszero\}}b^{-2A_{\alpha \beta}\mu_{\beta}(\bsk)-2B_{\alpha \beta}\mu_{1}(\bsk)} \nonumber \\
 & \leq b^{-2A_{\alpha \beta}\mu_{\beta}(P)}\sum_{\bsk\in P^{\perp}\setminus \{\bszero\}}b^{-2B_{\alpha \beta}\mu_{1}(\bsk)} \nonumber \\
 & = b^{-2A_{\alpha \beta}\mu_{\beta}(P)}\sum_{z=\mu_1(P)}^{\infty}b^{-2B_{\alpha \beta}z}\sum_{\substack{\bsk\in P^{\perp}\setminus \{\bszero\}\\ \mu_1(\bsk)=z}}1 \nonumber \\
 & = b^{-2A_{\alpha \beta}\mu_{\beta}(P)}\sum_{z=\mu_1(P)}^{\infty}b^{-2B_{\alpha \beta}z}\sum_{\substack{\bsl\in \NN_0^s\setminus \{\bszero\}\\ |\bsl|_1=z}}\sum_{\substack{\bsk\in P^{\perp}\setminus \{\bszero\}\\ \mu_1(k_j)=l_j, \forall j}}1 ,
\end{align}
where we denote $|\bsl|_1:=l_1+\cdots+l_s$. Since $P^{\perp}$ is a subgroup of $\{0,1,\ldots,b^{\beta gw}-1\}^s$ with the group operation $\oplus$, it follows from \cite[Lemma~2.2]{Skr06} that the innermost sum in the last expression can be bounded above by 
\begin{align*}
 \sum_{\substack{\bsk\in P^{\perp}\setminus \{\bszero\}\\ \mu_1(k_j)=l_j, \forall j}}1 \leq b^{|\bsl|_1-\mu_1(P)+1} = b^{z-\mu_1(P)+1},
\end{align*}
where the right-hand side is independent of the choice of $\bsl\in \NN_0^s$. Further the number of possible choices of $\bsl\in \NN_0^s$ with $|\bsl|_1=z$ is given by the usual binomial coefficient $\binom{z+s-1}{s-1}$.
Thus by using these results and the inequality
\begin{align*}
 \sum_{t=t_0}^{\infty}q^{-t}\binom {t+k-1}{k-1}\le q^{-t_0}\binom {t_0+k-1}{k-1}\left( 1-\frac{1}{q}\right)^{-k},
\end{align*}
which holds for any real number $q>1$ and any $k,t_0\in \NN$ (see for instance \cite[Lemma~13.24]{DPbook}) we have
\begin{align*}
 \sum_{z=\mu_1(P)}^{\infty}b^{-2B_{\alpha \beta}z}\sum_{\substack{\bsl\in \NN_0^s\setminus \{\bszero\}\\ |\bsl|_1=z}}\sum_{\substack{\bsk\in P^{\perp}\setminus \{\bszero\}\\ \mu_1(k_j)=l_j, \forall j}}1 
 & \leq \sum_{z=\mu_1(P)}^{\infty}b^{-2B_{\alpha \beta}z}\binom{z+s-1}{s-1}b^{z-\mu_1(P)+1} \\
 & = b^{-\mu_1(P)+1}\sum_{z=\mu_1(P)}^{\infty}b^{-(2B_{\alpha \beta}-1)z}\binom{z+s-1}{s-1} \\
 & \leq b^{-2B_{\alpha \beta}\mu_1(P)+1}\binom{\mu_1(P)+s-1}{s-1}\left( 1-b^{-(2B_{\alpha \beta}-1)}\right)^{-s} \\
 & \leq G_{\alpha,\beta,b,s}\frac{(\mu_1(P)+1)^{s-1}}{b^{2B_{\alpha \beta}\mu_1(P)}},
\end{align*}
with $G_{\alpha,\beta,b,s}=b\left( 1-b^{-(2B_{\alpha \beta}-1)}\right)^{-s}>0$, where the last inequality stems from the inequality
\begin{align*}
 \binom{\mu_1(P)+s-1}{s-1}=\prod_{j=1}^{s-1}\frac{\mu_1(P)+s-j}{s-j}\leq (\mu_1(P)+1)^{s-1}.
\end{align*}
Substituting the above bound into (\ref{eq:main_part_proof}) and using the the fact that $\mu_1(P)$ cannot be greater than $gw+1$, we have
\begin{align*}
 \sum_{\bsk\in P^{\perp}\setminus \{\bszero\}}b^{-2\mu_{\alpha}(\bsk)} & \leq G_{\alpha,\beta,b,s}\frac{(\mu_1(P)+1)^{s-1}}{b^{2A_{\alpha \beta}\mu_{\beta}(P)+2B_{\alpha \beta}\mu_1(P)}} \\
& \leq G_{\alpha,\beta,b,s}b^{2A_{\alpha \beta}t+2B_{\alpha \beta}t'}\frac{(gw+2)^{s-1}}{b^{2A_{\alpha \beta}\beta gw+2B_{\alpha \beta}gw}} \\
& \leq G'_{\alpha,\beta,b,s}\frac{(gw+2)^{s-1}}{b^{2\alpha gw}} \leq G''_{\alpha,\beta,b,s}\frac{(\log N)^{s-1}}{N^{2\alpha}}
\end{align*}
where $G'_{\alpha,\beta,b,s},G''_{\alpha,\beta,b,s}>0$ and we write $N=b^{gw}$. In summary, we have got an upper bound on the main part as follows.

\begin{proposition}\label{prop:main_part}
Let $s,\alpha\in \NN$, $\alpha \geq 2$.
Let $\beta,g\in \NN$ with $\beta \geq 2\alpha$, $g\geq 2\alpha s$ and $g\geq \lfloor s(\beta -1)/2\rfloor$, and let $b\geq \beta gs$ be a prime.
Then for $w\in \NN$, a digital net $P$ of cardinality $N=b^{gw}$ constructed as in Subsection~\ref{subsec:construction} satisfies
\begin{align*}
\sum_{\bsk,\bsl\in P^{\perp}\setminus \{\bszero\}}\left|\hat{\Kcal}_{\alpha,s}(\bsk,\bsl)\right| \leq A^{(1)}_{\alpha,\beta,b,s}\frac{(\log N)^{s-1}}{N^{2\alpha}},
\end{align*}
where $A^{(1)}_{\alpha,\beta,b,s}>0$.
\end{proposition}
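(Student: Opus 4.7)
The plan is to invoke the sparsity of $\hat{\Kcal}_{\alpha,s}$ established in Corollary~\ref{cor:sparse_walsh} together with the lower bound $\varkappa(P) \geq g+1$ from Lemma~\ref{lem:gsy_constr} to collapse the double sum over $P^{\perp} \setminus \{\bszero\}$ to its diagonal. Since $g \geq 2\alpha s$, we have $\varkappa(\bsk \ominus \bsl) \geq \varkappa(P) \geq 2\alpha s + 1$ for all distinct $\bsk, \bsl \in P^{\perp}$, so Corollary~\ref{cor:sparse_walsh} forces $\hat{\Kcal}_{\alpha,s}(\bsk, \bsl) = 0$ off the diagonal, reducing the task to estimating $\sum_{\bsk \in P^{\perp} \setminus \{\bszero\}} |\hat{\Kcal}_{\alpha,s}(\bsk,\bsk)|$.

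For the diagonal terms I would use the product structure $\hat{\Kcal}_{\alpha,s}(\bsk,\bsk) = \prod_j \hat{\Kcal}_\alpha(k_j, k_j)$ from equation~(\ref{eq:walsh_coeff_projection}), combined with a univariate decay estimate of the form $|\hat{\Kcal}_\alpha(k,k)| \leq D_{\alpha,b}\, b^{-2\mu_\alpha(k)}$ (available from \cite{BD09}) to obtain $|\hat{\Kcal}_{\alpha,s}(\bsk,\bsk)| \leq \max(1, D_{\alpha,b}^{s})\, b^{-2\mu_\alpha(\bsk)}$. This reduces matters to bounding $S := \sum_{\bsk \in P^{\perp} \setminus \{\bszero\}} b^{-2\mu_\alpha(\bsk)}$.

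The key tool for $S$ is the interpolation inequality for the Dick metric from \cite{GSYexist}, giving $\mu_\alpha(\bsk) \geq A_{\alpha\beta}\mu_\beta(\bsk) + B_{\alpha\beta}\mu_1(\bsk)$ with $B_{\alpha\beta} > 1/2$ since $\beta \geq 2\alpha$. I would pull out the uniform factor $b^{-2A_{\alpha\beta}\mu_\beta(P)}$ using $\mu_\beta(\bsk) \geq \mu_\beta(P)$, then stratify the remaining sum by $z = \mu_1(\bsk)$ and the $s$-tuple $(\mu_1(k_1),\ldots,\mu_1(k_s))$. Skriganov's counting lemma \cite{Skr06} bounds the inner count by $b^{z - \mu_1(P) + 1}$, and there are $\binom{z+s-1}{s-1}$ admissible compositions of $z$. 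The hypothesis $g \geq \lfloor s(\beta-1)/2 \rfloor$ and Lemma~\ref{lem:propagation} ensure that $P$ has $t$- and $t'$-values that are independent of $w$, so Lemma~\ref{lem:gsy_constr} yields $\mu_\beta(P) \geq \beta gw - t + 1$ and $\mu_1(P) \geq gw - t' + 1$.

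Finally, the geometric-binomial series $\sum_{z \geq \mu_1(P)} b^{-(2B_{\alpha\beta}-1)z}\binom{z+s-1}{s-1}$ is summed by a standard estimate (such as \cite[Lemma~13.24]{DPbook}), producing $\mu_1(P)^{s-1} b^{-2B_{\alpha\beta}\mu_1(P)}$ up to constants. Combining the $\mu_\beta$ and $\mu_1$ contributions, the exponent telescopes via $2A_{\alpha\beta}\beta + 2B_{\alpha\beta} = 2\alpha$, yielding a main term of order $(gw)^{s-1} b^{-2\alpha gw}$, i.e.\ $(\log N)^{s-1}/N^{2\alpha}$ after absorbing $\log b$ factors into the constant. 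The main obstacle is the bookkeeping around $B_{\alpha\beta} > 1/2$: this strict inequality is exactly what makes the geometric series in $z$ convergent, while the binomial coefficient is what promotes a single power of $z$ to $z^{s-1}$, so the $(s-1)$-exponent in $\log N$ emerges precisely from the interaction between the $\mu_1$-counting and the $\beta \geq 2\alpha$ hypothesis.
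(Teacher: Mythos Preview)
Your proposal is correct and follows essentially the same route as the paper's proof: collapse to the diagonal via Corollary~\ref{cor:sparse_walsh} and $\varkappa(P)\geq g+1\geq 2\alpha s+1$, bound the diagonal by $b^{-2\mu_\alpha(\bsk)}$ using \cite[Proposition~20]{BD09}, apply the interpolation inequality $\mu_\alpha\geq A_{\alpha\beta}\mu_\beta+B_{\alpha\beta}\mu_1$, stratify by $\mu_1$, invoke Skriganov's counting bound, and sum via \cite[Lemma~13.24]{DPbook}. The identity $2A_{\alpha\beta}\beta+2B_{\alpha\beta}=2\alpha$ and the role of $B_{\alpha\beta}>1/2$ are exactly as in the paper.
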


\subsection{An upper bound on the discretization part}\label{subsec:disc_part}
Throughout this subsection, let $n=\beta gw$ for ease of notation.
Following an argument similar to that used in the main part, we have
\begin{align*}
\sum_{\substack{\bsk\in \NN_0^s \setminus \{0,1,\ldots,b^n-1\}^s \\ \bsl\in P_{\infty}^{\perp}\setminus \{\bszero\}}}\left|\hat{\Kcal}_{\alpha,s}(\bsk,\bsl)\right| & = \sum_{\substack{\bsk\in \NN_0^s \setminus \{0,1,\ldots,b^n-1\}^s \\ \bsl\in P_{\infty}^{\perp}\setminus \{\bszero\}}}\prod_{j=1}^{s}\left|\hat{\Kcal}_{\alpha}(k_j,l_j)\right| \\
& \leq \sum_{\substack{\bsk\in \NN_0^s \setminus \{0,1,\ldots,b^n-1\}^s \\ \bsl\in P_{\infty}^{\perp}\setminus \{\bszero\}}}\prod_{j\colon (k_j,l_j)\neq (0,0)}D_{\alpha,b}b^{-\mu_{\alpha}(k_j)-\mu_{\alpha}(l_j)}\\
& \leq \max\{1,D_{\alpha,b}^s\}\sum_{\substack{\bsk\in \NN_0^s \setminus \{0,1,\ldots,b^n-1\}^s \\ \bsl\in P_{\infty}^{\perp}\setminus \{\bszero\}}}b^{-\mu_{\alpha}(\bsk)-\mu_{\alpha}(\bsl)}\\
& = \max\{1,D_{\alpha,b}^s\}\sum_{\bsk\in \NN_0^s \setminus \{0,1,\ldots,b^n-1\}^s }b^{-\mu_{\alpha}(\bsk)}\sum_{\bsl\in P_{\infty}^{\perp}\setminus \{\bszero\}}b^{-\mu_{\alpha}(\bsl)},
\end{align*}
where we use Equation~(\ref{eq:walsh_coeff_projection}) and \cite[Proposition~20]{BD09} in the first equality and the inequality, respectively.

Since $P$ is an order $\beta$ digital $(t,gw,s)$-net over $\FF_b$ with $t=\beta \left\lfloor s(\beta-1)/2 \right\rfloor$ as stated in the last subsection, it follows from Lemma~\ref{lem:propagation} that $P$ is also an order $\alpha$ digital $(t',gw,s)$-net with
\begin{align*}
t' = \lceil t \alpha / \beta\rceil = \alpha \left\lfloor \frac{s(\beta-1)}{2} \right\rfloor 
\end{align*}
and the precision $n=\beta gw$. Applying the result of \cite[Lemma~15.20]{DPbook} with $t',\alpha/\beta$ (the notations used in this paper) substituted into $t,\beta$ therein, respectively, we have
\begin{align*}
\sum_{\bsl\in P_{\infty}^{\perp}\setminus \{\bszero\}}b^{-\mu_{\alpha}(\bsl)} & = \sum_{\emptyset \ne u\subseteq \{1,\ldots,s\}}\sum_{\substack{\bsl_u\in \NN^{|u|}\\ (\bsl_u,\bszero)\in P_{\infty}^{\perp}}}b^{-\mu_{\alpha}(\bsl_u)} \\
& \leq \sum_{\emptyset \ne u\subseteq \{1,\ldots,s\}}H_{\alpha,b,|u|}\frac{(n \alpha /\beta -t'+2)^{|u|\alpha}}{b^{n \alpha /\beta -t'}} \leq H'_{\alpha,\beta,b,s}\frac{n^{s\alpha}}{b^{n \alpha /\beta}}
\end{align*}
where $H_{\alpha,b,|u|}>0$ for all $\emptyset \ne u\subseteq \{1:s\}$ and $H'_{\alpha,\beta,b,s}>0$.

In what follows, let 
\begin{align*}
S_1 := \sum_{k=0}^{\infty}b^{-\mu_{\alpha}(k)}\quad \text{and} \quad S_{2,n} := \sum_{k=0}^{b^n-1}b^{-\mu_{\alpha}(k)}.
\end{align*}
Here we note that $S_1$ is finite when $\alpha \geq 2$, see for instance \cite[Lemma~15.33]{DPbook}.
Then we have
\begin{align*}
\sum_{\bsk\in \NN_0^s \setminus \{0,1,\ldots,b^n-1\}^s }b^{-\mu_{\alpha}(\bsk)} & = \sum_{\bsk\in \NN_0^s}b^{-\mu_{\alpha}(\bsk)} - \sum_{\bsk\in \{0,1,\ldots,b^n-1\}^s}b^{-\mu_{\alpha}(\bsk)} \\
& = \left( \sum_{k=0}^{\infty}b^{-\mu_{\alpha}(k)}\right)^s - \left( \sum_{k=0}^{b^n-1}b^{-\mu_{\alpha}(k)}\right)^s \\
& = S_1^s-S_{2,n}^s \\
& = (S_1 - S_{2,n})\sum_{a=0}^{s-1}S_1^a S_{2,n}^{s-1-a} \\
& \leq (S_1 - S_{2,n})\sum_{a=0}^{s-1}S_1^{s-1} = (S_1 - S_{2,n})sS_1^{s-1}.
\end{align*}
Therefore right now we have the following bound on the discretization part:
\begin{align}\label{eq:discre_part_bound}
\sum_{\substack{\bsk\in \NN_0^s \setminus \{0,1,\ldots,b^n-1\}^s \\ \bsl\in P_{\infty}^{\perp}\setminus \{\bszero\}}}\left|\hat{\Kcal}_{\alpha,s}(\bsk,\bsl)\right| \leq \frac{S_1 - S_{2,n}}{b^{n \alpha /\beta}}H''_{\alpha,\beta,b,s}n^{s\alpha},
\end{align}
where we set $H''_{\alpha,\beta,b,s}=sS_1^{s-1}H'_{\alpha,\beta,b,s}$.

In the following, we give a bound on $S_1-S_{2,n}$.
For an integer $k\geq b^n$, we denote its $b$-adic expansion by $k=\kappa_1 b^{a_1-1}+\cdots+\kappa_v b^{a_v-1}$ with $\kappa_1,\ldots,\kappa_v\in \{1,\ldots,b-1\}$, $a_1>\cdots > a_v>0$ and $a_1>n$.
We have
\begin{align*}
S_1 - S_{2,n} & = \sum_{k=b^n}^{\infty}b^{-\mu_{\alpha}(k)} \\
& = \sum_{v=1}^{\infty}\sum_{\kappa_1,\ldots,\kappa_v\in \{1,\ldots,b-1\}}\sum_{\substack{a_1>\cdots > a_v>0 \\ a_1>n}}b^{-\mu_{\alpha}(\kappa_1 b^{a_1-1}+\cdots+\kappa_v b^{a_v-1})} \\
& = \sum_{v=1}^{\infty}(b-1)^{v}\sum_{\substack{a_1>\cdots > a_v>0 \\ a_1>n}}b^{-\mu_{\alpha}(b^{a_1-1}+\cdots+ b^{a_v-1})} \\
& = \sum_{v=1}^{\alpha-1}(b-1)^{v}T_{v,n} + \sum_{v=\alpha}^{\infty}(b-1)^{v}U_{v,\alpha,n} ,
\end{align*}
where we write
\begin{align*}
T_{v,n} := \sum_{\substack{a_1>\cdots > a_v>0 \\ a_1>n}}b^{-(a_1+\cdots+a_v)} \quad \text{and}\quad U_{v,\alpha,n}:= \sum_{\substack{a_1>\cdots > a_v>0 \\ a_1>n}}b^{-(a_1+\cdots+a_{\alpha})} .
\end{align*}

For any $1\leq v< \alpha$ we have
\begin{align*}
T_{v,n} & = \sum_{a_v=1}^{\infty}\sum_{a_{v-1}=a_v+1}^{\infty}\cdots \sum_{a_2=a_3+1}^{\infty}\sum_{\substack{a_1=a_2+1\\ a_1>n}}^{\infty}b^{-(a_1+\cdots+a_v)} \\
& \leq  \sum_{a_v=1}^{\infty}b^{-a_v}\sum_{a_{v-1}=a_v+1}^{\infty}b^{-a_{v-1}}\cdots \sum_{a_2=a_3+1}^{\infty}b^{-a_2}\sum_{a_1=n+1}^{\infty}b^{-a_1} \\
& = \frac{1}{b^n(b-1)}\sum_{a_v=1}^{\infty}b^{-a_v}\sum_{a_{v-1}=a_v+1}^{\infty}b^{-a_{v-1}}\cdots \sum_{a_2=a_3+1}^{\infty}b^{-a_2} \\
& = \frac{1}{b^n(b-1)^2}\sum_{a_v=1}^{\infty}b^{-a_v}\sum_{a_{v-1}=a_v+1}^{\infty}b^{-a_{v-1}}\cdots \sum_{a_3=a_4+1}^{\infty}b^{-2a_3} \\
& \qquad \vdots \\
& = \frac{1}{b^n(b-1)}\prod_{i=1}^{v-1}\frac{1}{b^i-1},
\end{align*}
where the empty product equals 1. Similarly, for any $v\geq \alpha$ we have
\begin{align*}
U_{v,\alpha,n} & = \sum_{a_v=1}^{\infty}\sum_{a_{v-1}=a_v+1}^{\infty}\cdots \sum_{a_2=a_3+1}^{\infty}\sum_{\substack{a_1=a_2+1\\ a_1>n}}^{\infty}b^{-(a_1+\cdots+a_{\alpha})} \\
& \leq \sum_{a_v=1}^{\infty}\cdots \sum_{a_{\alpha +1}=a_{\alpha+2}+1}^{\infty}\sum_{a_{\alpha}=a_{\alpha+1}+1}^{\infty}b^{-a_{\alpha}}\cdots \sum_{a_2=a_3+1}^{\infty}b^{-a_2}\sum_{a_1=n+1}^{\infty}b^{-a_1} \\
& = \frac{1}{b^n(b-1)}\left( \prod_{i=1}^{\alpha-1}\frac{1}{b^i-1}\right) \sum_{a_v=1}^{\infty}\cdots \sum_{a_{\alpha +1}=a_{\alpha+2}+1}^{\infty}b^{-(\alpha-1) a_{\alpha+1}} \\
& = \frac{1}{b^n(b-1)}\left( \prod_{i=1}^{\alpha-1}\frac{1}{b^i-1}\right) \left( \frac{1}{b^{\alpha-1}-1}\right)^{v-\alpha}.
\end{align*}
Therefore, $S_1-S_{2,n}$ can be bounded above by
\begin{align}\label{eq:bound_discre}
S_1 - S_{2,n} & \leq \sum_{v=1}^{\alpha-1}\frac{1}{b^n}\prod_{i=1}^{v-1}\frac{b-1}{b^i-1} + \sum_{v=\alpha}^{\infty}\frac{1}{b^n}\left( \prod_{i=1}^{\alpha-1}\frac{b-1}{b^i-1}\right) \left( \frac{b-1}{b^{\alpha-1}-1}\right)^{v-\alpha} \\
& = \frac{1}{b^n}\left[ \sum_{v=1}^{\alpha-1}\prod_{i=1}^{v-1}\frac{b-1}{b^i-1} + \frac{b^{\alpha-1}-1}{b^{\alpha-1}-b}\prod_{i=1}^{\alpha-1}\frac{b-1}{b^i-1} \right] =: \frac{B_{\alpha,b}}{b^n}, \nonumber
\end{align}
when $\alpha\geq 3$. Note that the second term of (\ref{eq:bound_discre}) does not converge when $\alpha=2$, and thus, we need a further argument to obtain a bound on $S_1-S_{2,n}$ as below.

Let $\alpha=2$. For any $v> n$ it obviously holds that $a_1>n$, so that we have
\begin{align*}
U_{v,2,n} & = \sum_{a_v=1}^{\infty}\sum_{a_{v-1}=a_v+1}^{\infty}\cdots \sum_{a_2=a_3+1}^{\infty}\sum_{a_1=a_2+1}^{\infty}b^{-(a_1+a_2)} \\
& = \frac{1}{b-1}\left( \frac{1}{b^2-1}\right)^{v-1}
\end{align*}
Applying this bound on $U_{v,2,n}$, $S_1-S_{2,n}$ can be bounded above by
\begin{align*}
S_1 - S_{2,n} & = (b-1)T_{1,n} + \sum_{v=2}^{n}(b-1)^{v}U_{v,2,n}+\sum_{v=n+1}^{\infty}(b-1)^{v}U_{v,2,n} \\
& \leq \frac{1}{b^n} + \sum_{v=2}^{n}\frac{1}{b^n}+ \sum_{v=n+1}^{\infty}\left( \frac{1}{b+1}\right)^{v-1} \\
& = \frac{n}{b^n} + \frac{1}{b}\left( \frac{1}{b+1}\right)^{n-1} \leq \frac{n+1}{b^n}\le \frac{2n}{b^n}.
\end{align*}

Applying the above bounds on $S_1-S_{2,n}$ to the right-hand side of (\ref{eq:discre_part_bound}), it follows that
\begin{align*}
\sum_{\substack{\bsk\in \NN_0^s \setminus \{0,1,\ldots,b^n-1\}^s \\ \bsl\in P_{\infty}^{\perp}\setminus \{\bszero\}}}\left|\hat{\Kcal}_{\alpha,s}(\bsk,\bsl)\right| \leq \frac{1}{b^{n (\alpha /\beta+1)}}B_{\alpha,b}H''_{\alpha,\beta,b,s}n^{s\alpha}.
\end{align*}
for $\alpha\geq 3$, and
\begin{align*}
\sum_{\substack{\bsk\in \NN_0^s \setminus \{0,1,\ldots,b^n-1\}^s \\ \bsl\in P_{\infty}^{\perp}\setminus \{\bszero\}}}\left|\hat{\Kcal}_{\alpha,s}(\bsk,\bsl)\right| \leq \frac{1}{b^{n (\alpha /\beta+1)}}2 H''_{\alpha,\beta,b,s}n^{s\alpha +1}.
\end{align*}
for $\alpha=2$. Let us recall that we used the notation $n=\beta gw$ and that $P$ consists of $N=b^{gw}$ points and $\beta \geq 2\alpha$. In summary, we have got an upper bound on the discretization part as follows.

\begin{proposition}\label{prop:discre_part}
Let $s,\alpha\in \NN$, $\alpha \geq 2$.
Let $\beta,g\in \NN$ with $\beta \geq 2\alpha$, $g\geq 2\alpha s$ and $g\geq \lfloor s(\beta -1)/2\rfloor$, and let $b\geq \beta gs$ be a prime.
Then for $w\in \NN$, the discretization part is bounded above by
\begin{align*}
\sum_{\substack{\bsk\in \NN_0^s \setminus \{0,1,\ldots,b^{\beta gw}-1\}^s \\ \bsl\in P_{\infty}^{\perp}\setminus \{\bszero\}}}\left|\hat{\Kcal}_{\alpha,s}(\bsk,\bsl)\right| \leq A^{(2)}_{\alpha,\beta,b,s}\frac{(\log N)^{s\alpha}}{N^{3\alpha}} ,
\end{align*}
when $\alpha\geq 3$, where $A^{(2)}_{\alpha,\beta,b,s}>0$. Similarly we have 
\begin{align*}
\sum_{\substack{\bsk\in \NN_0^s \setminus \{0,1,\ldots,b^{\beta gw}-1\}^s \\ \bsl\in P_{\infty}^{\perp}\setminus \{\bszero\}}}\left|\hat{\Kcal}_{\alpha,s}(\bsk,\bsl)\right| \leq A^{(3)}_{\beta,b,s}\frac{(\log N)^{s\alpha+1}}{N^{3\alpha}} ,
\end{align*}
when $\alpha=2$, where $A^{(3)}_{\beta,b,s}>0$.
\end{proposition}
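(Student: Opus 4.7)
The plan is to decouple the double sum defining the discretization part into a product of a tail sum over $\NN_0^s\setminus \{0,\ldots,b^n-1\}^s$ and a sum over the infinite dual net $P_\infty^\perp\setminus \{\bszero\}$, where $n=\beta gw$, and to bound each factor separately. The decoupling is afforded by the product structure of the reproducing kernel from Equation~(\ref{eq:walsh_coeff_projection}) together with the per-coordinate Walsh coefficient estimate of \cite[Proposition~20]{BD09}, which combine to give
\[
|\hat{\Kcal}_{\alpha,s}(\bsk,\bsl)|\leq \max\{1,D_{\alpha,b}^{s}\}\, b^{-\mu_{\alpha}(\bsk)-\mu_{\alpha}(\bsl)}.
\]

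For the dual-net factor, I would use Lemma~\ref{lem:propagation} to view $P$ as an order $\alpha$ digital $(t',gw,s)$-net with $t'=\alpha\lfloor s(\beta-1)/2\rfloor$ and precision $n=\beta gw$, then apply \cite[Lemma~15.20]{DPbook} to obtain a bound of the form $n^{s\alpha}\,b^{-n\alpha/\beta}$, up to a constant depending only on $\alpha,\beta,b,s$. For the tail factor, I would exploit the tensor product structure by writing it as $S_1^{s}-S_{2,n}^{s}$ with $S_1:=\sum_{k=0}^{\infty}b^{-\mu_\alpha(k)}$ and $S_{2,n}:=\sum_{k=0}^{b^n-1}b^{-\mu_\alpha(k)}$, and factoring this difference as $(S_1-S_{2,n})\sum_{a=0}^{s-1}S_1^{a}S_{2,n}^{s-1-a}\leq sS_1^{s-1}(S_1-S_{2,n})$. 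This reduces the problem to a one-dimensional tail estimate on $S_1-S_{2,n}$, which I would obtain by splitting integers $k\geq b^n$ according to their Hamming weight $v$ and introducing nested geometric sums $T_{v,n}$ for $v<\alpha$ (when $\mu_\alpha(k)=a_1+\cdots+a_v$) and $U_{v,\alpha,n}$ for $v\geq \alpha$ (when only the top $\alpha$ digit positions contribute). Peeling off the innermost variable in each iterated geometric sum yields $S_1-S_{2,n}=O(b^{-n})$ when $\alpha\geq 3$, and assembling everything with $N=b^{gw}$, $b^n=N^{\beta}$, and the hypothesis $\beta\geq 2\alpha$ converts $b^{-n(\alpha/\beta+1)}=N^{-\alpha-\beta}$ into at most $N^{-3\alpha}$, while $n^{s\alpha}\leq C(\log N)^{s\alpha}$.

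The main obstacle is the case $\alpha=2$: naively bounding $U_{v,2,n}$ uniformly in $v$ gives a geometric series $\sum_{v\geq 2}(b-1)^{v}U_{v,2,n}$ whose common ratio is exactly $(b-1)/(b^{\alpha-1}-1)=1$, so the series fails to converge. To handle this, I would split the range $v\geq 2$ at $v=n$. For $v>n$, the constraint $a_1>n$ is automatic from $a_1>a_2>\cdots>a_v>0$ and $v>n$, so it can be dropped, yielding the genuinely geometric estimate $U_{v,2,n}=(b-1)^{-1}(b^{2}-1)^{-(v-1)}$; summing contributes only $O(b^{-n})$. For $2\leq v\leq n$, the trivial estimate $U_{v,2,n}\leq C\, b^{-n}$ is enough, but accumulates a factor of $n$ from the number of values of $v$, giving $S_1-S_{2,n}\leq 2n/b^n$. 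This extra factor of $n$ is precisely what inflates $(\log N)^{s\alpha}$ to $(\log N)^{s\alpha+1}$ in the stated bound for $\alpha=2$. Combining the dual-net factor and the refined tail factor, and passing from $n$ to $\log N$ via $n=\beta gw=\beta \log_b N$, then completes the proof for both regimes.
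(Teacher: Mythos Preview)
Your proposal is correct and follows essentially the same argument as the paper: the same decoupling via Equation~(\ref{eq:walsh_coeff_projection}) and \cite[Proposition~20]{BD09}, the same use of Lemma~\ref{lem:propagation} together with \cite[Lemma~15.20]{DPbook} for the dual-net factor, the same reduction of the tail to $S_1-S_{2,n}$ via the telescoping factorization, the same $T_{v,n}/U_{v,\alpha,n}$ split by Hamming weight with nested geometric sums, and the same resolution of the $\alpha=2$ obstruction by splitting at $v=n$ to obtain $S_1-S_{2,n}\leq 2n/b^{n}$. The final conversion $b^{-n(\alpha/\beta+1)}=N^{-\alpha-\beta}\leq N^{-3\alpha}$ is also exactly what the paper does.
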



\end{document}